\newtheorem{theorem}{Theorem}[section]    % Standard theorem environment
\newtheorem{lemma}[theorem]{Lemma}          % Lemma environment with numbering
\theoremstyle{definition}
\newtheorem{definition}[theorem]{Definition}    % Definition environment with
\newtheorem*{remark}{Remark}             % Unnumbered environment for remarks.
\theoremstyle{example}
\newtheorem{example}[theorem]{Example}
\newtheorem{proposition}[theorem]{Proposition}
\title{On generalized configuration space and its homotopy groups}
\author{Jun Wang}
\address{Capital Normal University}
\email{wjun@cnu.edu.cn}
\address{School of Mathematical Science,
         Capital Normal University,
         105 West Third Ring Road North, Haidian District, Beijing, 100048, China}
\author{Xuezhi Zhao}
\address{Capital Normal University}
\email{zhaoxve@cnu.edu.cn}
\address{School of Mathematical Science,
         Capital Normal University,
         105 West Third Ring Road North, Haidian District, Beijing, 100048, China}
\begin{document}
\begin{abstract}
 Let $M$ be a subset of vector space or projective space. The authors define the \emph{generalized configuration space} of $M$ which is formed by $n$-tuples of elements of $M$ where any $k$ elements  of each $n$-tuple are linearly independent. The \emph{generalized configuration space}  gives a generalization of the classical configuration space defined by E.Fadell. Denote the \emph{generalized configuration space} of $M$  by $W_{k,n}(M)$. The authors are mainly interested in the calculation about the homotopy groups of  generalized configuration space.  This article gives the fundamental groups of  generalized configuration spaces of  $\mathbb{R}P^m$  for some special cases, and the connections between the homotopy groups of generalized configuration spaces of $S^m$  and the homotopy groups of Stiefel manifolds. It is  also proved that the higher homotopy groups of generalized configuration spaces $W_{k,n}(S^m)$ and $W_{k,n}(\mathbb{R}P^m)$ are isomorphic.

\end{abstract}

\maketitle

\section{Introduction}
In 1962, E.Fadell and L.Neuwirth \cite{Fadell1962} defined configuration space of manifold $M$ which is known as
\[
F_{n}(M)=\{(x_1,...,x_n)\mid x_i\in M, 1\leq i\leq n; x_i\ne x_j, \mbox{ if } i\ne j\}.
\]
After that, Fadell's configuration spaces are studied for decades in the intersections of algebra, geometry and topology (see e.g. \cite{E.R.Fadell;S.Y.Husseinni, Cohen.D.C.2018, GG2017, B.M.P.2017}). Fadell's configuration spaces also attract the interest of other fields such as phylogenetics, robotics and distributed computing (see e.g. \cite{Bjorner2012,Callegaro2016,Ghrist2010} ). \par

In Algebraic topology, there are many works on calculations about the homotopy group, homology group and  cohomology ring of Fadell's configuration spaces where $M$ are different manifolds or algebraic varieties (see e.g. \cite{E.Fadell1962, Benderskly1991,Bodigheimer1989,Totaro1996,C.Schiessl2019}). In 2004, M.Aouina and J.R.Klein \cite{AK2004} proved that a suitable iterated suspension of $F_n(M)$ is a homotopy invariant of PL manifold $M$. In Low-dimensional topology, the pure braid group $PB_n(M)$ defined by E.Artin \cite{Artin1925}\cite{Artin1947}  is  isomorphic to the fundamental group of configuration space $F_{n}(M)$. By this correspondence, configuration spaces form one of the basic tools for studying braids and links such as  finding defining relations in the braid groups of surface (see e.g. \cite{birman}) and finding invariants of knots and links (see e.g. \cite{bott1996}, \cite{bott1998}). There are also some studies about the loop space of configuration space (see e.g. \cite{F.R.Cohen2002}),  the calculation about homotopy groups of sphere (see e.g. \cite{Berrick2005}) and so on. \par

 Morever, the study about  generalization of  Fadell's configuration space attracts many people.  In 1997, M.A.Xicot\'{e}ncatl \cite{Xicotencatl1997} defined \emph{Orbit Configuration spaces} in his Ph.D. Thesis by
\[
Conf^{G}(M,n)=\{(m_1,...m_n)\in M^n\mid G\cdot m_i \cap G\cdot m_j=\varnothing \mbox{ if }i\ne j\}
\]
where the discrete group $G$ acts on a manifold $M$ properly discontinuously. In 2009, F.R.Cohen, T.Kohno and M.A.Xicot\'{e}ncatl \cite{Cohen2009} studied the orbit configuration space associated to $PSL(2,\mathbb{R})$.  In 1998, focusing on complex affine space of $m$-dimension $\mathbb{A}^m$, V.Moulton \cite{Moulton1998} defined the \emph{Vector braids} by the fundamental group of  the configuration space $X_n^m$  which is the space of ordered n-tuples of elements of $\mathbb{A}^m$  with $n\geq m+1$ such that each $m+1$  components of each n-tuple span the whole of $\mathbb{A}^m$ in the sense of affine geometry. In 2015, V.O.Manturov defined a family of groups $G_n^k$ in \cite{Manturov} and  configuration space $C_n^{'}(\mathbb{R}P^k)$ in \cite{Manturov2017} \cite{Manturova} which is the space of ordered $n$-tuples points of $\mathbb{R}P^k$ where no $k$ elements of these points belong to the same projective $(k-2)$-plane,  got that a subgroup of $G_n^k$ is isomorphism to the fundamental group of a subspace of the configuration space $C_n^{'}(\mathbb{R}P^k)$. \par

 In this paper, we generalize Fadell's configuration space by considering $n$-tuples of  elements of the vector space or projective space. Let $M$ be a subset of vector space $V$ over a field $\mathbb{F}$ or projective space $P(V)$ of $V$, we define \emph{generalized configuration space}  $W_{k,n}(M)$   which is formed by $n$-tuples of  elements in $M$ where any $k$  elements of each $n$-tuple are linearly independent (see Section \ref{def}). The aim of this paper is to calculate  homotopy groups of the\emph{ generalized configuration space} of $M$. It is worth mentioning that the generalized configuration spaces of $S^m$ or $\mathbb{R}P^m$ defined in present paper have associations to the Stiefel manifolds. We obtain that if $k=n\leq m+1$, the Stiefel manifold $V_{m+1,n}$ is a deformation retract of generalized configuration space $W_{k,n}(S^m)$. For some $k,n,M$, we also obtain the fiber bundle $\pi: W_{k,n+1}(M)\rightarrow W_{k,n}(M)$ where the map is defined by forgetting the last element of $(n+1)$-tuples in $W_{k,n+1}(M)$. Finally, we obtain the homotopy groups of the generalized configuration spaces of $S^m$ and $\mathbb{R}P^m$ for some special cases. The homotopy groups shall be listed at the end of introduction.\par
   The paper is comprised of three sections besides the introduction. In Section \ref{def}, we define  \emph{generalized configuration space} of $M$  denoted by $W_{k,n}(M)$  and  give some topological properties of the generalized configuration spaces of $M$.  In Section \ref{fiber}, we focus on $\mathbb{F}=\mathbb{R}$. Denote $e_i=(e_i^1,...,e_i^m)\in \mathbb{R}^m$ where $e_i^j=\delta_{i,j} ( =\mbox{ Kronecker delta})$, let $b_k=(e_1,...,e_k)$. For arbitrary point $p\in W_{k,n}(M)$, define $Q_{k-1,n}(M)_p\subset M$ (see Definition \ref{Q}), we obtain the following.
\begin{theorem}
Let the map $\pi:W_{k,n+1}(M)\rightarrow W_{k,n}(M)$  be defined by
\[
\pi(p_1,...,p_n,p_{n+1})=(p_1,...,p_n).
\]
\begin{itemize}
\item (1) Let $M=\mathbb{R}^{m+1} \mbox{ or }S^m$,  if $k=n=m$ or $k=n=m+1$, then
\[
\pi:W_{k,n+1}(M)\rightarrow W_{k,n}(M)
\]
 is a fiber bundle with the fiber $M-Q_{k-1,k}(M)_{b_k}$ and the fiber bundle admits a cross-section.
\item (2) Let $M=\mathbb{R}P^m$, if $k+1=n=m+2$, then $\pi:W_{k,n+1}(\mathbb{R}P^m)\rightarrow W_{k,n}(\mathbb{R}P^m)$ is a fiber bundle  with the fiber $\mathbb{R}P^m-Q_{m,m+2}(\mathbb{R}P^m)_{([e_1],...,[e_{m+1}], [e_1+...+e_{m+1}])}$.
\end{itemize}
\end{theorem}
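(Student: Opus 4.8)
The plan is to exhibit $\pi$ as locally trivial by transporting the fiber over a fixed configuration to nearby fibers through a continuously varying family of self-homeomorphisms of $M$ induced by linear automorphisms. First I would pin down the fiber: over $p=(p_1,\dots,p_n)\in W_{k,n}(M)$, a point $x\in M$ lies in $\pi^{-1}(p)$ precisely when $(p_1,\dots,p_n,x)\in W_{k,n+1}(M)$, that is, when $x$ together with every $(k-1)$-subset of $\{p_1,\dots,p_n\}$ is linearly independent; equivalently $x$ avoids the span of each such subset. Thus $\pi^{-1}(p)=M-Q_{k-1,n}(M)_p$ in the notation of Definition \ref{Q}. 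Since $k=n$ in case (1) and $n=m+2$, $k=m+1$ in case (2), this is exactly $M-Q_{k-1,k}(M)_p$ and $\mathbb{R}P^m-Q_{m,m+2}(\mathbb{R}P^m)_p$ respectively.

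For local triviality I would build, for each $p_0\in W_{k,n}(M)$ and each $p$ in a small neighborhood $U$, a linear automorphism $g_p\in GL(m+1,\mathbb{R})$ depending continuously on $p$ with $g_{p_0}=\mathrm{id}$ and $g_p$ carrying the spans attached to $p_0$ onto those attached to $p$. When $k=n=m+1$ (resp. the projective frame of case (2)) the configuration $p$ is a basis (resp. a projective frame), so $g_p$ is determined outright, and one reads it off as the matrix of columns $p_i$ relative to $p_0$; when $k=n=m$ one first extends the $m$-frame to a basis by appending a fixed vector $w\notin\mathrm{span}(p_0)$, which stays transverse for $p$ near $p_0$, and lets $g_p$ fix $w$. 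The induced self-map $h_p$ of $M$ is then $g_p$ itself on $M=\mathbb{R}^{m+1}$, the radial map $x\mapsto g_px/\|g_px\|$ on $M=S^m$, and the projectivity $[g_p]$ on $M=\mathbb{R}P^m$. In each case $h_{p_0}=\mathrm{id}$, $h_p$ is a homeomorphism of $M$, and because $g_p$ sends spans to spans it carries $Q_{k-1,n}(M)_{p_0}$ onto $Q_{k-1,n}(M)_p$; hence $(p,y)\mapsto(p,h_p(y))$ trivializes $\pi$ over $U$ with fiber $M-Q_{k-1,n}(M)_{p_0}$.

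To see that the fiber is the stated standard one I would invoke transitivity: $GL(m+1,\mathbb{R})$ acts transitively on ordered linearly independent frames of fixed length, and by the fundamental theorem of projective geometry $PGL(m+1,\mathbb{R})$ acts simply transitively on projective frames, so any $p_0$ can be moved to $b_k$ (resp. to $([e_1],\dots,[e_{m+1}],[e_1+\dots+e_{m+1}])$), identifying every fiber with $M-Q_{k-1,k}(M)_{b_k}$ (resp. the asserted projective fiber). For the cross-section in case (1) I would set $s(p)=(p_1,\dots,p_n,\sigma(p))$ with $\sigma(p)=p_1+\dots+p_n$ on $\mathbb{R}^{m+1}$ and its normalization $\sigma(p)/\|\sigma(p)\|$ on $S^m$: linear independence of the $p_i$ forces $\sigma(p)\neq0$ and gives $\sigma(p)$ the coordinate $1$ along each $p_i$, so $\sigma(p)$ lies on no span of an $(n-1)$-subset and the tuple stays in $W_{k,n+1}(M)$; continuity and $\pi\circ s=\mathrm{id}$ are immediate.

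The main obstacle is the local triviality for $M=S^m$. The isometry group $O(m+1)$ does not act transitively on linearly independent (as opposed to orthonormal) frames, so one cannot remain inside the isometries; the fix is the twisted action $g\cdot x=gx/\|gx\|$, and the work is to check that this is a genuine continuous $GL(m+1,\mathbb{R})$-action on $S^m$, that it is transitive on the base $W_{k,n}(S^m)$ for $k=n\le m+1$, and that the resulting $h_p$ indeed sends each sphere section $S^m\cap\mathrm{span}(p^0_S)$ onto $S^m\cap\mathrm{span}(p_S)$. Once these verifications are in place, and the easier transitivity statements for $GL$ on $\mathbb{R}^{m+1}$ and $PGL$ on $\mathbb{R}P^m$ are recorded, the homogeneous-space fiber bundle criterion upgrades the local trivializations to the asserted global bundle structure.
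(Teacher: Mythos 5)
Your proposal is correct and follows essentially the same route as the paper: the fiber is identified as the complement of $Q_{k-1,k}(M)_p$, local triviality is obtained by transporting fibers via linear automorphisms determined by the configuration (normalized on $S^m$, projectivized on $\mathbb{R}P^m$, with the frame completed by an extra transverse vector when $k=n=m$), and the cross-section is the normalized sum $\frac{p_1+\cdots+p_k}{\|p_1+\cdots+p_k\|}$. The only cosmetic difference is that the paper's trivialization maps each nearby fiber directly onto the standard fiber over $b_k$ (so no separate transitivity step is needed), whereas you trivialize relative to $p_0$ and then invoke transitivity of $GL(m+1,\mathbb{R})$ (resp.\ $PGL(m+1,\mathbb{R})$) to identify with the standard fiber.
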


 The proof of above theorem can be found in Section \ref{fiber}. This theorem is used to calculate   homotopy groups of the generalized configuration spaces $W_{k,n}(M)$. \par
 In Section \ref{homotopy}, let $b_k=(e_1,...,e_k)$, $\tilde{b}_k=(e_1,....,e_{k},\frac{e_1+...+e_{k}}{\|e_1+...+e_{k}\|})$, we obtain the homotopy groups of  $W_{k,n}(S^m)$ and $W_{k,n}(\mathbb{R}P^m)$ for some $k,n$ as follows.
\begin{theorem}
\begin{align*}
\pi_p(W_{k,n}(S^m))&\cong
\begin{cases}
\pi_p(V_{m+1,n}),& \\
                           &1\leq k=n\leq m+1,p\geq 1,\\
  \pi_p(V_{m+1,m})&\oplus \pi_p(S^m-Q_{m-1,m}(S^m)_{b_m})  , \\
                            &1\leq k=n-1=m,p\geq 1.
\end{cases}\\
\pi_p(W_{m+1,m+2}(S^m),\tilde{b}_{m+1})&\cong \pi_p(V_{m+1,m+1},b_{m+1}),p\geq 1.
\end{align*}
\end{theorem}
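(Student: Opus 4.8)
All three isomorphisms reduce either to the deformation retraction announced in the introduction or to the long exact homotopy sequence of the fiber bundle of the first theorem, so I would organize the proof along those lines. For the first line ($1 \le k = n \le m+1$), the plan is to use that when $k=n$ a point of $W_{k,n}(S^m)$ is exactly an $n$-tuple of linearly independent unit vectors in $\mathbb{R}^{m+1}$, and that the normalized Gram--Schmidt process, carried out continuously in a parameter, retracts this space onto the subspace $V_{m+1,n}$ of orthonormal $n$-frames. A strong deformation retraction is a homotopy equivalence, so it induces $\pi_p(W_{k,n}(S^m)) \cong \pi_p(V_{m+1,n})$ for all $p$; the only thing to check is that the straight-line homotopies used in Gram--Schmidt never cross a linearly dependent configuration, which is exactly where $n \le m+1$ is needed.

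For the second line ($k = n-1 = m$) I would take $k=n=m$ in part (1) of the first theorem, producing a fiber bundle $\pi : W_{m,m+1}(S^m) \to W_{m,m}(S^m)$ with fiber $F = S^m - Q_{m-1,m}(S^m)_{b_m}$ and a cross-section $s$. Because $s$ makes $\pi_*$ split surjective, the long exact sequence breaks into split short exact sequences $0 \to \pi_p(F) \to \pi_p(W_{m,m+1}(S^m)) \to \pi_p(W_{m,m}(S^m)) \to 0$. Applying the first line to the base $W_{m,m}(S^m)$ (the instance $k=n=m$) identifies $\pi_p(W_{m,m}(S^m))$ with $\pi_p(V_{m+1,m})$, and for $p \ge 2$ the splitting then gives $\pi_p(W_{m,m+1}(S^m)) \cong \pi_p(V_{m+1,m}) \oplus \pi_p(F)$. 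For $p=1$ I would additionally have to show the monodromy acts trivially on $\pi_1(F)$, so that the split extension is a genuine direct product rather than merely a semidirect one.

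For the third line ($k = m+1$, $n = m+2$) I would take $k=n=m+1$ in part (1), yielding a fiber bundle $\pi : W_{m+1,m+2}(S^m) \to W_{m+1,m+1}(S^m)$ with fiber $F = S^m - Q_{m,m+1}(S^m)_{b_{m+1}}$ and a section. The key computation is the homotopy type of $F$: since $b_{m+1} = (e_1,\dots,e_{m+1})$ is a basis, the forbidden set $Q_{m,m+1}(S^m)_{b_{m+1}}$ is the union of the $m+1$ coordinate equators $\{x_j = 0\} \cap S^m$, so $F$ is the set of unit vectors with no zero coordinate. This is the disjoint union of the $2^{m+1}$ open spherical orthants, each contractible, whence $\pi_p(F) = 0$ for every $p \ge 1$. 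The long exact sequence then gives $\pi_p(W_{m+1,m+2}(S^m)) \cong \pi_p(W_{m+1,m+1}(S^m))$ for $p \ge 2$, while for $p=1$ the injectivity supplied by $\pi_1(F)=0$ and the surjectivity supplied by the section together force an isomorphism; the based statement is precisely what records that $\tilde{b}_{m+1}$ lies in the positive orthant component of $F$. Applying the first line to the base $W_{m+1,m+1}(S^m)$ (the instance $k=n=m+1$) replaces $\pi_p(W_{m+1,m+1}(S^m))$ by $\pi_p(V_{m+1,m+1})$, which is the claim.

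The step I expect to be the main obstacle is the precise identification of the fiber's homotopy type in each case --- in particular recognizing for the third line that deleting the coordinate equators splits $S^m$ into contractible orthants and thereby annihilates all higher homotopy of the fiber --- together with the low-dimensional bookkeeping at $p=1$, where the disconnectedness of the fibers and the exact choice of basepoints $\tilde{b}_{m+1}$ and $b_{m+1}$ must be carried through the exact sequence.
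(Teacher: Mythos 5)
Your overall route coincides with the paper's: the first isomorphism via the Gram--Schmidt deformation retraction of $W_{n,n}(S^m)$ onto $V_{m+1,n}$, the second and third via the forgetful fiber bundles with cross-sections and the resulting split short exact sequences of homotopy groups, and the identification of the fiber $S^m-Q_{m,m+1}(S^m)_{b_{m+1}}$ as a disjoint union of $2^{m+1}$ contractible open orthants, which kills all of its homotopy groups. So the architecture is the same as the paper's in all three cases.

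The genuine gap is in the second isomorphism at $p=1$, exactly at the point you flag and then defer: showing that the monodromy action of $\pi_1(W_{m,m}(S^m))$ on $\pi_1(S^m-Q_{m-1,m}(S^m)_{b_m})$ is trivial. A cross-section only yields a semidirect product decomposition, and the fiber's fundamental group is highly nonabelian (for $m=3$ it is a free group of rank $7$), so the direct sum in the statement is equivalent to the triviality of this action; that verification is the actual content of the paper's proof, not low-dimensional bookkeeping. The paper does it concretely: the generator of $\pi_1(W_{m,m}(S^m))\cong\pi_1(V_{m+1,m})\cong\mathbb{Z}_2$ (here $m\geq 2$) is represented by the loop $\alpha(t)=(r(t)e_1,\dots,r(t)e_m)$, where $r(t)$ is the rotation by angle $2\pi t$ in the $(e_m,e_{m+1})$-plane; one uses not the ``normalized sum'' section but the orthogonal-complement section $\mathcal{K}(p_1,\dots,p_m)=(p_1,\dots,p_m,y/\|y\|)$ with $y\in\mathrm{Span}\{p_1,\dots,p_m\}^{\perp}$, for which $\mathcal{K}(\alpha(t))=(e_1,\dots,e_{m-1},r(t)e_m,r(t)e_{m+1})$; and then the conjugate $\mathcal{K}_*(\alpha)\cdot i_*(x)\cdot \mathcal{K}_*(\alpha)^{-1}$ is homotoped to $i_*(x)$ by simultaneously applying $r(1-s)$ to the last two entries and to the loop $x$ in the fiber, the key point being that applying an invertible linear map to a configuration preserves membership in $W_{m,m+1}(S^m)$, so the homotopy never leaves the space. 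Without this (or some other argument that the action is trivial), the $p=1$ case of the second line is not established; note also that the choice of section matters for making the computation tractable, which your proposal does not address.
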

\begin{theorem}
\begin{align*}
\pi_p(W_{k,n}(\mathbb{R}P^m))&\cong
\pi_p(W_{k,n}(S^m)),p\geq 2;\\
\pi_1(W_{k,n}(\mathbb{R}P^m))&\cong
\begin{cases}
(\mathbb{Z}_2)^n, &k=n\leq m-1,\\
 Q_8, &k=n=m=2,\\
 Q_8\oplus\mathbb{Z}_2, &k=n=m=3,\\
 Q_8\ast_{Z} D_8, &k=n=m=4.
\end{cases}
\end{align*}
\end{theorem}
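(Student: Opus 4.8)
The plan is to exploit the antipodal double cover $S^{m}\to\mathbb{R}P^{m}$ at the level of configuration spaces. First I would observe that the map
\[
p\colon W_{k,n}(S^{m})\to W_{k,n}(\mathbb{R}P^{m}),\qquad (x_{1},\dots,x_{n})\mapsto([x_{1}],\dots,[x_{n}]),
\]
is well defined (sign changes preserve linear independence), surjective, and is a regular $2^{n}$-fold covering: over a point $([x_{1}],\dots,[x_{n}])$ the fibre is the $2^{n}$ tuples $(\pm x_{1},\dots,\pm x_{n})$, all of which again lie in $W_{k,n}(S^{m})$, and the deck transformation group is $(\mathbb{Z}_{2})^{n}$ acting by independent sign flips. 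Since any covering map induces isomorphisms on $\pi_{p}$ for $p\ge 2$, this instantly yields $\pi_{p}(W_{k,n}(\mathbb{R}P^{m}))\cong\pi_{p}(W_{k,n}(S^{m}))$ for all $p\ge 2$, which is the first assertion of the theorem and holds in complete generality.

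For the fundamental group I would feed the covering into the short exact sequence
\[
1\to\pi_{1}(W_{k,n}(S^{m}))\to\pi_{1}(W_{k,n}(\mathbb{R}P^{m}))\to(\mathbb{Z}_{2})^{n}\to 1 ,
\]
which is valid because $W_{k,n}(S^{m})$ is connected in every case listed (it is homotopy equivalent to $V_{m+1,n}$, connected for $n\le m$, and to $V_{m+1,m}\cong\mathrm{SO}(m+1)$ when $k=n=m$). Using the previous theorem, $\pi_{1}(W_{k,n}(S^{m}))\cong\pi_{1}(V_{m+1,n})$. When $k=n\le m-1$ the Stiefel manifold $V_{m+1,n}$ is $(m-n)$-connected with $m-n\ge 1$, hence simply connected, so the sequence collapses to $\pi_{1}(W_{k,n}(\mathbb{R}P^{m}))\cong(\mathbb{Z}_{2})^{n}$, giving the first branch.

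The remaining three branches ($k=n=m$ with $m=2,3,4$) are where the real work lies, since here $\pi_{1}(V_{m+1,m})\cong\mathbb{Z}_{2}$ and I must pin down a nonsplit extension rather than just its order. My preferred route is to realize the homotopy type explicitly: projective Gram--Schmidt is well defined on tuples of lines and covers the linear one, so $W_{m,m}(\mathbb{R}P^{m})\simeq V_{m+1,m}/(\mathbb{Z}_{2})^{m}\cong \mathrm{SO}(m+1)/\Gamma$, where flipping the $i$th frame vector, after restoring positive orientation, becomes right multiplication by $D_{i}=\mathrm{diag}(1,\dots,-1,\dots,1,-1)$ (signs in slots $i$ and $m+1$), and $\Gamma=\langle D_{1},\dots,D_{m}\rangle\cong(\mathbb{Z}_{2})^{m}$. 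Passing to the universal cover $\mathrm{Spin}(m+1)$, one gets $\pi_{1}(W_{m,m}(\mathbb{R}P^{m}))\cong\widetilde{\Gamma}$, the preimage of $\Gamma$ of order $2^{m+1}$. In the Clifford algebra the lift of $D_{i}$ is $\omega_{i}=e_{i}e_{m+1}$, and a one-line computation gives $\omega_{i}^{2}=-1=:z$ and $\omega_{i}\omega_{j}=-\omega_{j}\omega_{i}$ for $i\ne j$. Thus $\widetilde{\Gamma}$ is generated by pairwise anticommuting square roots of the central involution $z$. I would then identify this abstract group case by case: for $m=2$ it is $\langle\omega_{1},\omega_{2}\rangle\cong Q_{8}$; for $m=3$ the element $\omega_{1}\omega_{2}\omega_{3}$ is central of order $2$ and splits off, giving $Q_{8}\oplus\mathbb{Z}_{2}$; for $m=4$ the centre is exactly $\{1,z\}$, so $\widetilde{\Gamma}$ is extraspecial of order $32$.

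The main obstacle is precisely this last case, where one must distinguish the two extraspecial groups $2^{1+4}_{\pm}$ of order $32$ --- equivalently $D_{8}\ast_{Z}D_{8}$ from $Q_{8}\ast_{Z}D_{8}$. I would settle it via the quadratic form $q$ on $\widetilde{\Gamma}/\langle z\rangle\cong(\mathbb{Z}_{2})^{4}$ defined by $\tilde{x}^{2}=z^{q(x)}$, whose associated symplectic form records the anticommutators; here $q(\omega_{i})=1$ for all $i$, and choosing a symplectic basis (e.g.\ $\omega_{1},\omega_{2},\omega_{3}\omega_{4},\omega_{1}\omega_{2}\omega_{3}$) yields Arf invariant $1$. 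The nontrivial Arf invariant identifies the minus type, so $\widetilde{\Gamma}\cong Q_{8}\ast_{Z}D_{8}$, completing the fourth branch. The two subtleties to verify with care are the orientation bookkeeping that turns a sign flip into right multiplication by $D_{i}$ (and hence the equivariance of projective Gram--Schmidt), and the correctness of the Arf computation, since everything in the $m=4$ line hinges on it.
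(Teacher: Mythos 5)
Your proposal is correct, and for the hard branches it follows a genuinely different route from the paper. The first two parts coincide: the paper also uses the $2^{n}$-sheeted covering $\tilde{P}\colon W_{k,n}(S^{m})\to W_{k,n}(\mathbb{R}P^{m})$ with deck group $(\mathbb{Z}_2)^n$ for $p\ge 2$, and for $k=n\le m-1$ it likewise deduces $(\mathbb{Z}_2)^n$ from the simple connectivity of $V_{m+1,n}$ (proved there by an iterated fibration argument). The divergence is in the cases $k=n=m\in\{2,3,4\}$: the paper quotes Van Buskirk for $m=2$, and for $m=3,4$ it uses Armstrong's exact sequence $1\to\pi_1(W_{m,m}(S^m))\to\pi_1(W_{m,m}(\mathbb{R}P^m))\to(\mathbb{Z}_2)^m\to 1$ only to fix the order of the group, then appeals to Rhodes' reduced fundamental group of a transformation group and counts elements of order $2$ and $4$ to single out the isomorphism type from the classification of groups of order $16$ and $32$. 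You instead identify the group intrinsically: using the $(\mathbb{Z}_2)^m$-equivariance of the Gram--Schmidt retraction you realize $W_{m,m}(\mathbb{R}P^m)\simeq SO(m+1)/\Gamma$ with $\Gamma=\langle D_1,\dots,D_m\rangle$, pull $\Gamma$ back to $\mathrm{Spin}(m+1)$, and read off the extension from the Clifford relations $\omega_i^2=z$, $\omega_i\omega_j=z\,\omega_j\omega_i$; I checked your symplectic basis and Arf computation for $m=4$ (Arf $=1$, hence the minus-type extraspecial group $Q_8\ast_Z D_8$), and your $m=3$ splitting via the central element $\omega_1\omega_2\omega_3$ of order $2$, and both are right and agree with the paper's element counts. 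Your approach buys a uniform, computation-free-of-classification-tables description of $\pi_1(W_{m,m}(\mathbb{R}P^m))$ as $\widetilde{\Gamma}\subset\mathrm{Spin}(m+1)$ valid for every $m\ge 2$, at the cost of the orientation bookkeeping and equivariance checks you already flag; the paper's approach is more elementary but case-by-case and relies on an exhaustive count inside a classification list.
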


\section{The definition of generalized configuration spaces $W_{k,n}(M)$}\label{def}
 Let $V$ be a vector space over a field $\mathbb{F}$, $P(V)$ be a projective space of $V$. Let $M$ be a subset of $V$ or $P(V)$, $m$ be the dimension of $M$. Consider $n$-tuples of elements of $M$, define \emph{generalized configuration spaces} of $M$ as follows,
\begin{align*}
W_{k,n}(M)=\{(p_1,...,p_n)\mid & p_i\in M, 1\leq i\leq n; \mbox{ for any k-element subset }\\
 & \{i_1,...,i_{k}\}\subset \{1,...,n\},p_{i_1},...,p_{i_{k}} \mbox{ are linearly independent}\}.
\end{align*}
If $n<k$, it is easily seen that generalized configuration spaces $W_{k,n}(M)$  is equal to $W_{n,n}(M)$, so we will assume throughout that $n\geq k$. Note that  $[v_1],...,[v_k]\in P(V)$ are  linearly independent means that  $v_1,...,v_k\in V$ are linearly independent, it's clear that the definition is well defined.\par
Here are three examples.
\begin{example}
\begin{align*}
W_{1,n}(\mathbb{R}^m)&=\{(p_1,...,p_n)\in (\mathbb{R}^m)^n\mid\forall i\in \{1,...,n\},
p_{i}\mbox{ are linearly independent}\}\\
&=\{(p_1,...,p_n)\in (\mathbb{R}^m)^n\mid\forall i\in \{1,...,n\},
p_{i}\ne 0\}\\
&=(\mathbb{R}^m\setminus\{0\})^n\simeq(S^{m-1})^n.
\end{align*}
\end{example}
\begin{example}
\begin{align*}
&W_{2,n}(S^m)\\
=&\{(p_1,...,p_n)\in (S^m)^n\mid \forall \{i,j\}\subset\{1,...,n\}, p_i,p_j \mbox{ are linearly independent}\}\\
=&\{(p_1,...,p_n)\in (S^m)^n\mid p_i\ne \pm p_j ,\mbox{ if }i\ne j\}.
\end{align*}
It is equal to the spaces $\mathcal{A}_n(S^m)$ defined and studied by E.Fadell in \cite{E.Fadell1962}.
\end{example}
\begin{example}
\begin{align*}
&W_{2,n}(\mathbb{R}P^m)\\
=&\{(p_1,...,p_n)\in(\mathbb{R}P^m)^n\mid\forall \{i,j\}\subset \{1,...,n\},p_{i},p_{j} \mbox{ are linearly independent}\}\\
=&\{(p_1,...,p_n)\in(\mathbb{R}P^m)^n\mid p_{i}\ne p_{j}, \mbox{ if }i\ne j \}.
\end{align*}
It is equal to Fadell's configuration spaces of $\mathbb{R}P^m$: $F_{n}(\mathbb{R}P^m)$.
\end{example}
Denote $M^n$ as the product space of $M$. In general, we can describe the generalized configuration spaces $W_{k,n}(M)$ as $W_{k,n}(M)=M^n-\Delta_{k,n}(M)$  where
\[
\Delta_{k,n}(M)=\bigcup_{\{i_1,...,i_k\}\subset \{1,...,n\}}\{(p_1,...,p_n)\in M^n\mid p_{i_1},...,p_{i_{k}} \mbox{ are linearly dependent}\}.
\]
It is understood that the subspace $\Delta_{k,n}(M)$ of product space $M^n$ is equal to
\[
\Delta_{k,n}(M)=\bigcup_{\{i_1,...,i_k\}\subset \{1,...,n\}}\{(p_1,...,p_n)\in M^n\mid\mbox{rank} (p_{i_1},...,p_{i_{k}})<k\}.
\]
Give generalized configuration spaces $W_{k,n}(M)$ the topology induced by $M$, we obtain the following.
\begin{proposition}
If $M=\mathbb{R}^{k+1},\mathbb{R}P^k$ or $S^k$, then $W_{k,n}(M)$  is a    path-connected
non-compact manifold.
\end{proposition}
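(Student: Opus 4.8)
The plan is to verify the three asserted properties—manifold, non-compactness, path-connectedness—in turn, dispatching the first two quickly and concentrating the real work on connectivity. Throughout I regard the points of $S^k$ and $\mathbb{R}P^k$ as (equivalence classes of) vectors in $\mathbb{R}^{k+1}$, so that the notion of linear (in)dependence, and hence $W_{k,n}(M)=M^n\setminus\Delta_{k,n}(M)$, makes sense uniformly in all three cases.

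First, for the manifold property I would show that $\Delta_{k,n}(M)$ is closed, so that $W_{k,n}(M)$ is an \emph{open} subset of the manifold $M^n$. For a fixed $k$-subset $\{i_1,\dots,i_k\}$, the condition $\mathrm{rank}(p_{i_1},\dots,p_{i_k})<k$ is the simultaneous vanishing of all $k\times k$ minors of the matrix with these columns. These minors are continuous on $M^n$ (for $\mathbb{R}^{k+1}$ and $S^k$ directly; for $\mathbb{R}P^k$ the minors of chosen representatives are well defined up to nonzero scalars, so their common zero locus is intrinsic), hence each dependence locus, and their finite union $\Delta_{k,n}(M)$, is closed. An open subset of a manifold is a manifold, yielding $\dim W_{k,n}(\mathbb{R}^{k+1})=n(k+1)$ and $\dim W_{k,n}(S^k)=\dim W_{k,n}(\mathbb{R}P^k)=nk$.

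Second, non-compactness. When $M=\mathbb{R}^{k+1}$, the space is a nonempty open subset of $\mathbb{R}^{n(k+1)}$—nonempty because $n$ points $v_i=(1,t_i,\dots,t_i^{k})$ on the moment curve (distinct $t_i$) are in general position—and it is unbounded, since rescaling a single coordinate preserves all independence relations; an unbounded subset of Euclidean space is not compact. When $M=S^k$ or $\mathbb{R}P^k$, the ambient $M^n$ is compact, connected and Hausdorff, and $\Delta_{k,n}(M)\ne\varnothing$ (take $p_1=p_2$; this needs $k\ge 2$, and indeed $W_{1,n}(S^1)=(S^1)^n$ is genuinely compact, so the statement is to be read for $k\ge 2$ in these cases). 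Thus $W_{k,n}(M)$ is a nonempty proper open subset of a connected space and cannot be closed; since compact subsets of Hausdorff spaces are closed, it is not compact.

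Third, and this is where the main effort lies, path-connectedness. The crux is a codimension estimate: I claim $\Delta_{k,n}(M)$ is a finite union of smooth submanifolds of $M^n$, each of codimension $\geq 2$. For one $k$-subset, the dependence locus is (times a free $M^{n-k}$ factor) the rank-$<k$ locus of $k$-tuples; fixing $k-1$ generically independent points spanning a subspace $U$ of dimension $k-1$, the last point is forced to lie in $U\cap M$, which is $S^{k-2}\subset S^k$, or $\mathbb{R}P^{k-2}\subset\mathbb{R}P^k$, or an affine hyperplane-type locus in $\mathbb{R}^{k+1}$—always of codimension $2$ (equivalently, the rank-$\le k-1$ determinantal variety in $(k+1)\times k$ matrices has codimension $(2)(1)=2$). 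Stratifying by exact rank exhibits the full locus as a finite union of submanifolds of codimension $\geq 2$. Granting this, I finish by transversality: given $x,y\in W_{k,n}(M)$, join them by a path in the connected manifold $M^n$ and perturb it rel endpoints to be transverse to every stratum; a $1$-dimensional path is transverse to a submanifold of codimension $\geq 2$ only by missing it, so the perturbed path stays in $W_{k,n}(M)$. Since $W_{k,n}(M)$ is open in a manifold it is locally path-connected, so connectedness gives path-connectedness. The step I expect to be the genuine obstacle is precisely this codimension-$\geq 2$ claim and its clean deployment: one must confirm the incidence/determinantal locus really has codimension two in each of the three ambient manifolds and present it as a finite union of \emph{smooth} submanifolds, so that the general-position argument applies without invoking delicate properties of singular varieties.
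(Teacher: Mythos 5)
Your proof is correct, and on the key point it takes a genuinely different route from the paper. For the manifold property both arguments are the same: $\Delta_{k,n}(M)$ is the common zero locus of $k\times k$ minors, hence closed, so $W_{k,n}(M)$ is open in $M^n$. For non-compactness the paper exhibits a sequence in $W_{k,n}(M)$ converging to a point of the form $(p_2,p_2,p_3,\dots,p_n)\in\Delta_{k,n}(M)$, i.e.\ it shows directly that $W_{k,n}(M)$ is not closed in $M^n$; your ``nonempty proper open subset of a compact connected Hausdorff space'' argument (plus unboundedness for $\mathbb{R}^{k+1}$) is the same fact packaged more abstractly, and your caveat that $k\ge 2$ is needed for the compact cases (since $W_{1,n}(S^1)=(S^1)^n$ is compact) is a genuine point the paper glosses over. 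The real divergence is path-connectedness: the paper proposes to ``construct a path between arbitrary two points by matrix transformation,'' i.e.\ an explicit path built from linear/projective transformations connecting two configurations, whereas you argue by general position, showing each dependence stratum is a smooth submanifold of codimension at least $2$ (the rank-$\le k-1$ determinantal locus of $(k+1)\times k$ matrices has codimension $2$, and this survives restriction to $S^k$ and $\mathbb{R}P^k$ because the rank strata are invariant under column scaling) and then perturbing a path in the connected manifold $M^n$ off the closed set $\Delta_{k,n}(M)$. The explicit construction, when carried out, gives concrete paths one can reuse later (the paper does use such matrix paths to describe generators of fundamental groups); your transversality argument is less constructive but more robust and makes the ``why'' transparent --- the obstruction to connectivity would have to live in codimension $1$, and it does not. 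Both are valid; yours is arguably the more complete written proof, since the paper's one-sentence sketch leaves the actual matrix path unconstructed.
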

\begin{proof}
Let $M=\mathbb{R}^{k+1},\mathbb{R}P^k$ or $S^k$, the point of $W_{k,n}(M)$ can be seen as $n\times (k+1)$-matrix with that any  $n\times k$ matrix formed by $k$ rows of the point has the rank of $k$. It is easily seen that $\Delta_{k,n}(M)$ is determined by polynomial equations which are the  minor matrices of order $k$ of $n\times k$ matrices,  so with the topology induced by $M$, $W_{k,n}(M)$ are manifold. It is convenient to  construct a  path between arbitrary two points by matrix transformation for the proof of path-connected. For non-compactness, construct a sequence of points with the limit $(p_2,p_2,p_3,...,p_n)$, then we obtain the result.
\end{proof}

\begin{proposition}
The generalized configuration space $W_{k,n}(S^m)$ is a deformation retract of  $W_{k,n}(\mathbb{R}^{m+1})$.
\end{proposition}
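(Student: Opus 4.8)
The plan is to realize the deformation retraction through radial normalization of each coordinate. First I would define the retraction $r\colon W_{k,n}(\mathbb{R}^{m+1})\to W_{k,n}(S^m)$ by sending a tuple $(p_1,\dots,p_n)$ to $(p_1/\|p_1\|,\dots,p_n/\|p_n\|)$. This map is well defined: every point of $W_{k,n}(\mathbb{R}^{m+1})$ has all coordinates nonzero (any $k$-element subfamily containing $p_i$ is linearly independent, which forces $p_i\neq 0$), so the norms never vanish; moreover, replacing each $p_i$ by the positive multiple $p_i/\|p_i\|$ does not alter the rank of any $k$-element subfamily, so the image again satisfies the defining independence condition and lands in $W_{k,n}(S^m)$. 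Restricted to $W_{k,n}(S^m)$ the map $r$ is the identity, since unit vectors are fixed by normalization.

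Next I would produce the homotopy. Define $H\colon W_{k,n}(\mathbb{R}^{m+1})\times[0,1]\to W_{k,n}(\mathbb{R}^{m+1})$ by
\[
H\big((p_1,\dots,p_n),t\big)=\Big(\big((1-t)+\tfrac{t}{\|p_1\|}\big)p_1,\dots,\big((1-t)+\tfrac{t}{\|p_n\|}\big)p_n\Big).
\]
At $t=0$ this is the identity and at $t=1$ it equals $r$. The crucial point is that each coordinate scalar $\lambda_i(t)=(1-t)+t/\|p_i\|$ is strictly positive for all $t\in[0,1]$, being a combination of the positive numbers $1$ and $1/\|p_i\|$ with nonnegative coefficients that do not vanish simultaneously. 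Hence each coordinate of $H((p_1,\dots,p_n),t)$ is a nonzero scalar multiple of $p_i$, and since scaling individual vectors by nonzero scalars preserves linear (in)dependence of every $k$-element subfamily, the whole tuple stays inside $W_{k,n}(\mathbb{R}^{m+1})$ for every $t$. This is the step I expect to carry the real content: the homotopy must never leave the configuration space, and that is guaranteed precisely by the scale-invariance of the independence condition.

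Finally I would check continuity and the strong-deformation property. Continuity of $H$ follows from continuity of the Euclidean norm together with the fact that $1/\|p_i\|$ is continuous on $W_{k,n}(\mathbb{R}^{m+1})$ because $p_i\neq 0$ there. For $(p_1,\dots,p_n)\in W_{k,n}(S^m)$ we have $\|p_i\|=1$, so $\lambda_i(t)=1$ for all $t$ and $H$ fixes the tuple throughout; thus $H$ is a strong deformation retraction of $W_{k,n}(\mathbb{R}^{m+1})$ onto $W_{k,n}(S^m)$, which is the assertion. No subtle point beyond the scale-invariance observation arises, so the argument is essentially self-contained once the retraction and homotopy are written down.
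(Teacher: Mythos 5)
Your proof is correct and follows the same route as the paper, which simply declares the deformation retraction to be given by unitization of the vectors; you have written out explicitly the normalization map, the linear homotopy of scalars, and the scale-invariance of the linear independence condition that the paper leaves implicit.
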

\begin{proof}
It is convenient to define the deformation retraction of  $W_{k,n}(\mathbb{R}^{m+1})$ onto $W_{k,n}(S^m)$  via  unitization of vectors.
\end{proof}
Recall that the Stiefel manifold \cite{Stiefel1935} is defined by
\begin{align*}
V_{m,n}=\{(v_1,...,v_n)\mid v_k\in \mathbb{R}^m, \|v_k\|=1,1\leq k\leq n;  \langle v_i,v_j\rangle=\delta_{i,j},1\leq i,j\leq n \}.
\end{align*}
 The following Lemma gives the homotopy types of generalized configuration spaces $W_{n,n}(S^m)$.
\begin{lemma}\label{dr}
If $n\leq m+1$, then $V_{m+1,n}$ is a deformation retract of $W_{n,n}(S^{m})$.
\end{lemma}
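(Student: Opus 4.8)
The plan is to realize the retraction by Gram--Schmidt orthonormalization, but carried out on the sphere so that \emph{every} intermediate tuple stays inside $W_{n,n}(S^m)$. First I would record the easy inclusion: an orthonormal frame $(v_1,\dots,v_n)$ consists of unit vectors that are in particular linearly independent, so $V_{m+1,n}\subseteq W_{n,n}(S^m)$, and both spaces are nonempty precisely because $n\leq m+1$ (one cannot have $n$ linearly independent vectors in $\mathbb{R}^{m+1}$ otherwise). The goal is then a strong deformation retraction $H\colon W_{n,n}(S^m)\times[0,1]\to W_{n,n}(S^m)$ with $H_0=\mathrm{id}$, $H_1\bigl(W_{n,n}(S^m)\bigr)\subseteq V_{m+1,n}$, and $H_t$ fixing $V_{m+1,n}$ pointwise for all $t$.

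To build $H$, I would view a point $(p_1,\dots,p_n)\in W_{n,n}(S^m)$ as the $(m+1)\times n$ matrix $A$ whose columns are the $p_i$. Since the $p_i$ are linearly independent, $A$ has full column rank and admits a unique decomposition $A=QR$, where $Q=(v_1,\dots,v_n)$ is an orthonormal frame and $R$ is upper triangular with strictly positive diagonal; this is exactly the Gram--Schmidt process, and the assignment $A\mapsto(Q,R)$ is continuous (indeed smooth) on the full-column-rank locus. Now set $R_t=(1-t)R+tI$ for $t\in[0,1]$. Then $R_t$ is again upper triangular, and its $i$-th diagonal entry $(1-t)R_{ii}+t$ is a convex combination of two positive numbers, hence positive; so $R_t$ is invertible and, because $Q$ preserves norms, each column of $QR_t$ is nonzero. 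I would then define $H_t(p_1,\dots,p_n)$ to be the tuple obtained by normalizing each column of $QR_t$ to unit length.

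The verification of the four required properties is then routine. At $t=0$ we have $QR_0=A$, whose columns $p_i$ are already unit, so normalization changes nothing and $H_0=\mathrm{id}$. At $t=1$ we have $QR_1=Q$, whose columns are the unit vectors $v_i$, so $H_1$ outputs the orthonormal frame $Q\in V_{m+1,n}$. If the input already lies in $V_{m+1,n}$ then $R=I$, whence $R_t=I$ and $H_t$ is the identity for every $t$, giving the pointwise-fixing property. Finally $H_t$ lands in $W_{n,n}(S^m)$: the columns of $QR_t$ are linearly independent (because $Q$ is injective and $R_t$ is invertible), and normalizing each column multiplies $A$ on the right by an invertible positive diagonal matrix, which preserves linear independence while placing every column on $S^m$. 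Continuity of $H$ follows from continuity of Gram--Schmidt together with the fact that all the normalizing denominators are nonzero.

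The step requiring the most care—and the reason for normalizing rather than simply using the usual linearly-independent-frame deformation retraction $A\mapsto QR_t$—is keeping the homotopy inside $W_{n,n}(S^m)$: the unnormalized interpolation leaves the sphere, so one must rescale each column and then confirm that this rescaling neither destroys linear independence nor drifts off $S^m$. The positivity of the diagonal of $R$ coming from Gram--Schmidt is exactly what makes all of this work simultaneously: it guarantees that $R_t$ stays invertible, that the columns of $QR_t$ never vanish, and hence that the normalization is well defined and continuous throughout. Assembling these observations yields the desired strong deformation retraction of $W_{n,n}(S^m)$ onto $V_{m+1,n}$.
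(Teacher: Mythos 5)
Your proposal is correct and takes essentially the same route as the paper: the $i$-th column of $QR_t$ is exactly $(1-t)p_i+tp_i'$ with $p_i'$ the Gram--Schmidt orthonormalization of $p$, so after column-normalization your homotopy coincides with the one the paper writes down. You merely add the (worthwhile) verifications that the interpolated tuples remain in $W_{n,n}(S^m)$, that the normalizing denominators never vanish, and that $V_{m+1,n}$ is fixed pointwise, details the paper leaves implicit.
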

\begin{proof}
Let $n\leq m+1$, consider the generalized configuration spaces
 \[
 W_{n,n}(S^m)=\{(p_1,...,p_n)\mid p_i\in S^m,1\leq i\leq n; p_1,...,p_n \mbox{ are linearly independent}\}.
 \]
 For any point $p=(p_1,...,p_n)\in W_{n,n}(S^m)$, apply the Gram-Schmidt process, we obtain $p^{'}=(p_1^{'},...,p_n^{'}) \in V_{m+1,n}$.\par
Define the deformation retraction of $W_{n,n}(S^m)$ onto $V_{m+1,n}$ as follows:
 \begin{align*}
 H: W_{n,n}(S^m)\times [0,1]&\rightarrow W_{n,n}(S^m)\\
 (p,t)&\mapsto (\frac{h(p_1,t)}{\|h(p_1,t)\|},\ldots,\frac{h(p_n,t)}{\|h(p_n,t)\|}), t\in[0,1].
 \end{align*}
 where $h(p_i,t)=(1-t)p_i+tp_i^{'}$ for $1\leq i\leq n,t\in [0,1].$
\end{proof}
Now, consider two  subsets of $V$ or $P(V)$ denoted by $M, N$, and a  map $f: M\rightarrow N$.
\begin{definition}
Let $k,n$ be positive integers. The map $f: M\rightarrow N$ is said to be \emph{$(k,n)-$regular} if  for every point $(x_1,...,x_n)\in W_{k,n}(M)$, $(f(x_1),...,f(x_n))\in W_{k,n}(N)$. A map $f: M\rightarrow N$ is said to be \emph{regular} if it is  $(k,n)-$regular for arbitrarily positive integers $k,n$.
\end{definition}
In particular, if $M$ is a covering space over $N$, the following Lemma is an immediate consequence.
\begin{lemma}\label{cover}
Let $P:M\rightarrow N$ be a covering space, $k,n$ positive integers. If both the map $P:M\rightarrow N$ and $P^{-1}: N\rightarrow M$ are $(k,n)-$regular, then $\tilde{P}:W_{n,k}(M)\rightarrow W_{n,k}(N)$ defined by
\[
\tilde{P}(x_1,...,x_n)= (P(x_1),...,P(x_n))
\]
is a covering space.
\end{lemma}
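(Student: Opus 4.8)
The plan is to realise $\tilde P$ as the restriction of the $n$-fold product covering
\[
P^{n}=P\times\cdots\times P\colon M^{n}\longrightarrow N^{n}
\]
to the relevant subspaces, and then to invoke the elementary fact that a covering map restricts to a covering map over any subspace of its base, provided one restricts over the full preimage. First I would record the standard observation that a finite product of covering maps is again a covering map: if $U_{i}\subset N$ is evenly covered by $P$ for each $i$, then $U_{1}\times\cdots\times U_{n}$ is evenly covered by $P^{n}$, and such product neighbourhoods form a basis of $N^{n}$; hence $P^{n}$ is a covering space. (Here a point of $W_{k,n}$ is an $n$-tuple, so the spaces in play are the $W_{k,n}$ matching the $(k,n)$-regularity hypotheses, and $\tilde P$ is literally the corestriction of $P^{n}$.)

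The key step is to identify the $P^{n}$-preimage of the base subspace with the total subspace, i.e.\ to prove the set equality
\[
(P^{n})^{-1}\bigl(W_{k,n}(N)\bigr)=W_{k,n}(M)\subset M^{n}.
\]
The inclusion $\subseteq$ is exactly the hypothesis that $P^{-1}$ is $(k,n)$-regular: if $(x_{1},\dots,x_{n})\in M^{n}$ satisfies $(P(x_{1}),\dots,P(x_{n}))\in W_{k,n}(N)$, then, reading each $x_{i}$ as a choice of preimage of $P(x_{i})$, regularity of $P^{-1}$ forces $(x_{1},\dots,x_{n})\in W_{k,n}(M)$. The reverse inclusion $\supseteq$ is precisely the hypothesis that $P$ is $(k,n)$-regular. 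Thus $W_{k,n}(M)$ is exactly the full $P^{n}$-preimage of $W_{k,n}(N)$, so the subspace is \emph{saturated} with respect to $P^{n}$.

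Finally I would apply the restriction lemma: for a covering $q\colon E\to B$ and any $B'\subseteq B$, the corestriction $q|\colon q^{-1}(B')\to B'$ is a covering, where both carry the subspace topology. Concretely, given a point of $W_{k,n}(N)$, choose a product neighbourhood $U=U_{1}\times\cdots\times U_{n}$ evenly covered by $P^{n}$; then $U\cap W_{k,n}(N)$ is evenly covered by $\tilde P$, since intersecting the sheets over $U$ with $W_{k,n}(M)=(P^{n})^{-1}(W_{k,n}(N))$ yields a disjoint decomposition of $\tilde P^{-1}\bigl(U\cap W_{k,n}(N)\bigr)$ into sets each mapped homeomorphically onto $U\cap W_{k,n}(N)$. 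Surjectivity of $\tilde P$ follows because $P$ is onto and, by the preimage identity, every point of $W_{k,n}(N)$ lifts into $W_{k,n}(M)$.

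The main obstacle is conceptual rather than computational: one must pin down the intended meaning of ``$P^{-1}$ is $(k,n)$-regular'', since $P^{-1}$ is a multivalued relation and not literally a map to which the Definition applies, and then check that the two regularity hypotheses together yield the exact set equality $(P^{n})^{-1}(W_{k,n}(N))=W_{k,n}(M)$. It is this saturation (both inclusions simultaneously) that is doing the real work, as it is precisely the condition under which the product covering restricts to a covering; everything after it is the routine verification that evenly covered neighbourhoods restrict to the subspaces.
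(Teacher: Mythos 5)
Your proposal is correct and follows essentially the same route as the paper: both establish the set equality $(P^{n})^{-1}(W_{k,n}(N))=W_{k,n}(M)$ from the two regularity hypotheses and then restrict the product covering $P^{n}\colon M^{n}\to N^{n}$ over the saturated subspace. Your write-up merely makes explicit the even-covering verification and the (legitimate) point that ``$P^{-1}$ is $(k,n)$-regular'' must be read as a condition on the multivalued preimage, both of which the paper leaves implicit.
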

\begin{proof}
 It is understood that if both $P$ and $P^{-1}$ are $(k,n)-$regular, then $\forall (x_1,...,x_n)\in W_{k,n}(M)$,
  \[
  (P(x_1),...,P(x_n))\in W_{k,n}(N)
  \]
  and $\forall (y_1,...,y_n)\in W_{k,n}(N)$,
  \[
  (P^{-1}(y_1),...,P^{-1}(y_n))\in W_{k,n}(M),
  \]
  thus we obtain
\[
\tilde{P}^{-1}(W_{k,n}(N))=W_{k,n}(M).
\]
It is known that if $P:M\rightarrow N$ be a covering space,  then $\tilde{P}:M^n\rightarrow N^n$ is a covering space, thus $\tilde{P}:W_{k,n}(M)\rightarrow W_{k,n}(N)$ is a covering space.
\end{proof}
\begin{example}
Let $P: S^m\rightarrow \mathbb{R}P^m$ be the covering space induced by action of $\mathbb{Z}_2$, then for arbitrarily positive integers $k,n$, $\tilde{P}:W_{k,n}(S^m)\rightarrow W_{k,n}(\mathbb{R}P^m)$ is a $2^n$-sheeted covering space. The group of deck transformations of this covering space is the direct sum of $n$ copies of $\mathbb{Z}_2$ which is denoted by   $(\mathbb{Z}_2)^n$.
\end{example}
\section{The Fiber bundle structure}\label{fiber}
In this section, we shall prove that   $\pi: W_{k,n+1}(M)\rightarrow W_{k,n}(M)$ defined by
\[
\pi(p_1,...,p_n,p_{n+1})=(p_1,...,p_{n})
\]
is a fiber bundle for some $k,n,M$. \par
In particular, if $k=1$, then $W_{1,n}(M)$ is the product space of $M-\{0\}$, thus the map $\pi:W_{1,n+1}(M)\rightarrow W_{1,n}(M)$ defined by
\begin{align*}
\pi(p_1,...,p_n,p_{n+1})= (p_1,...,p_n)
\end{align*}
is a trivial fibration over $ W_{1,n}(M)$ with the fiber $M-\{0\}$.

Let $k=2$, E.Fadell proved the following:
\begin{lemma}\cite[Theorem 2.2]{E.Fadell1962}
The map $\pi:W_{2,n}(S^m)\rightarrow W_{2,n-1}(S^m),n\geq 2$ defined by
\[
\pi(p_1,...,p_n)=(p_2,...,p_n)
\]
is a locally trivial fiber map with fiber $S^m-M_{2(n-1)}$ where $M_{2(n-1)}$ is a fixed set of $2(n-1)$ distinct points of $S^m$.
\end{lemma}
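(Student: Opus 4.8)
The plan is to first identify the fiber explicitly and then to establish local triviality by constructing a continuous family of ambient homeomorphisms of $S^m$ that carry a fixed ``forbidden set'' onto the one that varies with the base point. For the \emph{fiber identification}, fix $b=(p_2,\dots,p_n)\in W_{2,n-1}(S^m)$. A point $p_1\in S^m$ satisfies $(p_1,p_2,\dots,p_n)\in W_{2,n}(S^m)$ precisely when $p_1$ is linearly independent from each $p_i$ with $i\geq 2$; since $p_1,p_i\in S^m$, linear dependence means $p_1=\pm p_i$. Hence $\pi^{-1}(b)=S^m\setminus\{\pm p_2,\dots,\pm p_n\}$. Because $b\in W_{2,n-1}(S^m)$ forces $p_i\neq \pm p_j$ for $i\neq j$, the $2(n-1)$ points $\pm p_2,\dots,\pm p_n$ are all distinct, so this fiber is homeomorphic to the model $S^m-M_{2(n-1)}$.

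For \emph{local triviality}, over a neighborhood $U$ of a chosen base point $b_0=(p_2^0,\dots,p_n^0)$ I would produce a continuous assignment $b\mapsto h_b$ of self-homeomorphisms of $S^m$ with $h_{b_0}=\mathrm{id}$ and $h_b(\pm p_i^0)=\pm p_i(b)$ for each $i$. Each $h_b$ then restricts to a homeomorphism $S^m\setminus\{\pm p_i^0\}\to\pi^{-1}(b)$. The trivialization is the map $\phi:\pi^{-1}(U)\to U\times\bigl(S^m\setminus\{\pm p_i^0\}\bigr)$ given by $\phi(p_1,p_2,\dots,p_n)=(b,h_b^{-1}(p_1))$ with $b=(p_2,\dots,p_n)$; its continuous inverse sends $(b,q)$ to $(h_b(q),p_2,\dots,p_n)$, and $\phi$ visibly commutes with $\pi$.

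To \emph{construct $h_b$}, I use that the $2(n-1)$ points $\pm p_i^0$ are distinct: choose pairwise disjoint closed geodesic balls $B_i^+\ni p_i^0$ and $B_i^-\ni -p_i^0$ with $B_i^-=a(B_i^+)$, where $a$ denotes the antipodal map, and shrink $U$ so that $p_i(b)\in\mathrm{int}\,B_i^+$ for all $b\in U$. Inside each ball I build the standard ``sliding'' homeomorphism fixing the boundary and carrying $p_i^0$ to $p_i(b)$, depending continuously on $p_i(b)$ and equal to the identity when $p_i(b)=p_i^0$; on $B_i^-$ I define $h_b$ by conjugating this with $a$, so that $-p_i^0\mapsto -p_i(b)$ automatically, and I set $h_b$ equal to the identity off $\bigcup_i(B_i^+\cup B_i^-)$. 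This yields a homeomorphism of $S^m$ depending continuously on $b$, with $h_{b_0}=\mathrm{id}$.

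The main obstacle is exactly the ball-supported sliding homeomorphism: producing, inside a Euclidean ball, a homeomorphism that is the identity on the boundary, sends a prescribed interior point to a nearby prescribed target, depends continuously on the target, and is the identity when source equals target. I would realize it by a radial push along the geodesic from $p_i^0$ toward $p_i(b)$, damped to zero near the boundary, then extend by the identity. Checking continuity in $b$ and the antipodal equivariance is routine but is where the genuine care lies; once $h_b$ is in hand, the verification that $\phi$ and its inverse are continuous and $\pi$-compatible is immediate, giving local triviality over $U$ and hence the stated conclusion.
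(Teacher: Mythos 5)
Your argument is correct. Note, however, that the paper does not prove this statement at all: it is quoted verbatim from Fadell's 1962 paper (Theorem 2.2 there), so there is no internal proof to compare against. What you have written is essentially the classical Fadell--Neuwirth local-triviality argument, which is the method of the cited source: identify the fiber over $b=(p_2,\dots,p_n)$ as $S^m\setminus\{\pm p_2,\dots,\pm p_n\}$ (correctly using that linear dependence of two unit vectors means equality or antipodality, and that membership in $W_{2,n-1}(S^m)$ makes these $2(n-1)$ points distinct), and then trivialize over a small $U$ by a continuous family of ball-supported ``sliding'' homeomorphisms, made antipodally equivariant by conjugating with the antipodal map on the mirror balls $B_i^-=a(B_i^+)$. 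That equivariance step is the one genuinely new ingredient compared to the ordinary configuration-space case, and you handle it correctly. It is worth contrasting this with the technique the present paper uses for its own bundle results (Theorem 3.3): there the authors move the whole forbidden set at once by a single global linear map $\varphi_x$ determined by a basis $(x_1,\dots,x_{m+1})$, which is available only because $k=m$ or $m+1$ supplies a full frame; for $k=2$ and general $n$ no single linear map can carry all $n-1$ antipodal pairs simultaneously, so your local, compactly supported construction is the right tool. The only points requiring care, which you flag yourself, are the continuity of $(b,q)\mapsto h_b(q)$ and of its inverse; the standard radial-push formula supplies both, so the proof is complete in outline.
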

For other cases, first, define subspaces of $M$  as follows.
\begin{definition}\label{Q}
For $p=(p_1,...,p_n)\in W_{k,n}(M)$, $ \{i_1,...,i_{k-1}\}\subset \{1,...,{n}\}$, define that
\begin{align*}
Q_{n}^{i_1,...,i_{k-1}}(M)_p=\{x\in M\mid x,p_{i_1},...,p_{i_{k-1}} \mbox{ are linearly dependent}\}.
\end{align*}
Denote that $Q_{k-1,n}(M)_p=\bigcup_{\{i_1,...,i_{k-1}\}\subset\{1,...,{n}\}}Q_{n}^{i_1,...,i_{k-1}}(M)_p$.
\end{definition}
In particular, if $k=1$, we have $Q_{k-1,n}(M)_p=\{0\}\subset M$. Now, focus on the generalized configuration spaces $W_{k,k}(S^m), m\geq 1$,  let $b_k=(e_1,...,e_{k})$ be the base point of $W_{k,k}(S^m)$.
\begin{theorem}\label{bundle}
Let the map $\pi:W_{k,k+1}(S^m)\rightarrow W_{k,k}(S^m)$  be defined by
\begin{align*}
\pi(p_1,...,p_{k},p_{k+1})=(p_1,...,p_{k}).
\end{align*}
If $k=m$ or $m+1$, then  $\pi: W_{k,k+1}(S^m)\rightarrow W_{k,k}(S^m)$ is a fiber bundle with the fiber $S^m-Q_{k-1,k}(S^m)_{b_k}$.\par
\end{theorem}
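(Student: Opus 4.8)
The plan is to realize $\pi$ as a fiber bundle by means of a continuous transitive action of the general linear group on the base $W_{k,k}(S^m)$ that lifts equivariantly to the total space, and then to convert the resulting homogeneous-space structure into local trivializations. First I would pin down the fibers. For $p=(p_1,\dots,p_k)\in W_{k,k}(S^m)$, a point $x\in S^m$ satisfies $(p_1,\dots,p_k,x)\in W_{k,k+1}(S^m)$ if and only if every $k$-element subset of $\{p_1,\dots,p_k,x\}$ is linearly independent. Since $p_1,\dots,p_k$ are already independent, the only new constraints are those involving $x$, so the condition reads $x\notin Q_{k-1,k}(S^m)_p$. Hence $\pi^{-1}(p)=S^m-Q_{k-1,k}(S^m)_p$, and in particular $\pi^{-1}(b_k)=S^m-Q_{k-1,k}(S^m)_{b_k}$, the claimed fiber.

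Next I would set up the action. Let $G=GL_{m+1}(\mathbb{R})$ and put $A\ast x=Ax/\|Ax\|$ for $A\in G$ and $x\in S^m$. Because the positive scalar $\|Bx\|$ cancels in $A\ast(B\ast x)=A(Bx/\|Bx\|)/\|A(Bx/\|Bx\|)\|=(AB)x/\|(AB)x\|$, the assignment $\ast$ is a continuous left action of $G$ on $S^m$; applied coordinatewise it preserves linear independence and dependence of every subtuple (as $A$ is invertible), so it restricts to an action on each $W_{k,n}(S^m)$, and $\pi$ is $G$-equivariant. I would then check transitivity on the base in the two cases at hand: given two tuples of linearly independent unit vectors in $W_{k,k}(S^m)$ with $k=m$ or $k=m+1$, extend each to a basis of $\mathbb{R}^{m+1}$ and let $A$ be the linear isomorphism matching the two bases; since the target vectors already have unit norm, $A$ carries one tuple to the other under $\ast$.

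Transitivity identifies $W_{k,k}(S^m)$ with the homogeneous space $G/H$, where $H=\mathrm{Stab}(b_k)$ is a closed, hence Lie, subgroup of $G$. Consequently the projection $G\to G/H$ is a principal $H$-bundle and admits a continuous section $s\colon U\to G$ over a neighbourhood $U$ of any point, normalized so that $s(u)\ast b_k=u$; for $k=m+1$ one even has the global section $u\mapsto$ (the matrix whose columns are the components of $u$), which is invertible precisely because those components are independent. Using equivariance I would define $\Phi_U\colon\pi^{-1}(U)\to U\times\bigl(S^m-Q_{k-1,k}(S^m)_{b_k}\bigr)$ by $\Phi_U(p_1,\dots,p_k,x)=\bigl(\pi(p),\,s(\pi(p))^{-1}\ast x\bigr)$, with inverse $(u,z)\mapsto(u_1,\dots,u_k,\,s(u)\ast z)$. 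For $u=\pi(p)$ the element $s(u)^{-1}$ sends $(p_1,\dots,p_k,x)$ to $(b_k,\,s(u)^{-1}\ast x)$, which lies in $W_{k,k+1}(S^m)$ because the action preserves that space; hence $s(u)^{-1}\ast x$ lands in the fiber $S^m-Q_{k-1,k}(S^m)_{b_k}$, and both $\Phi_U$ and its inverse are well defined and continuous. This yields local triviality, so $\pi$ is a fiber bundle with the stated fiber.

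The step I expect to be the technical heart is the production of the continuous local sections of $G\to G/H$ in the previous paragraph, equivalently a continuous local choice of completing frame for a $k$-tuple; this rests on the standard fact that the quotient of a Lie group by a closed subgroup is a locally trivial principal bundle. The cases $k=m$ and $k=m+1$ are exactly those in which $W_{k,k}(S^m)$ is the full-frame homogeneous space of $G$ (and, via Lemma \ref{dr}, is modeled by the Stiefel manifold $V_{m+1,k}$), which is the setting needed for the later homotopy computations; the $k=m+1$ case produces a global section and hence a trivial bundle, matching the cross-section assertion in the introduction. The remaining points — the well-definedness and continuity of the normalized action, and the verification that $\Phi_U$ maps into $U\times F$ — are routine once $G$-equivariance is in hand.
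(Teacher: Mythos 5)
Your argument is correct, and its core mechanism is the same as the paper's: transport the reference fiber $S^m-Q_{k-1,k}(S^m)_{b_k}$ to the fiber over $x$ by a normalized invertible linear map chosen continuously in $x$. The difference is in how that continuous choice is produced. The paper constructs it by hand: for $k=m$ it completes $(x_1,\dots,x_m)$ to a basis by the unique unit vector $x_{m+1}\in\mathrm{Span}\{x_1,\dots,x_m\}^{\perp}$ with $\det(x_1,\dots,x_m,x_{m+1})>0$, takes the linear map $\varphi_x$ sending $e_i\mapsto x_i$, and uses $\psi\circ\varphi_x|_{S^m}$ (your $s(u)\ast(-)$) as the fiber homeomorphism; for $k=m+1$ the tuple is already a basis. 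You instead obtain the section abstractly, from transitivity of the normalized $GL_{m+1}(\mathbb{R})$-action together with the fact that $G\to G/H$ is a principal bundle for a closed subgroup $H$ of a Lie group; to make this airtight you should also note that the orbit map $G/H\to W_{k,k}(S^m)$ is a homeomorphism (the standard open-mapping fact for transitive actions of second-countable Lie groups on locally compact Hausdorff spaces). What your packaging buys: the argument is uniform in $k$ and in fact proves the bundle structure for every $k\le m+1$, not just $k=m,m+1$; and your observation that $u\mapsto(u_1|\cdots|u_{m+1})$ is a global section when $k=m+1$ shows that bundle is trivial, which is slightly more than the paper's cross-section statement. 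What the paper's explicit $\varphi_x$ buys is a concrete formula that is reused later (the cross-section $\mathcal{K}$ of Remark \ref{cross} is exactly the orthogonal-complement completion), which matters for the explicit homotopy computations in Section \ref{homotopy}.
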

\begin{proof}
Let $k=m$, consider the generalized configuration spaces
\begin{align*}
W_{m,m}(S^m)=&\{(p_1,...,p_m)\in (S^m)^m\mid p_1,\ldots,p_m\mbox{ are linearly independent}\},\\
W_{m,m+1}(S^m)=&\{(p_1,...,p_{m+1})\in (S^m)^{m+1}\mid \forall \{i_1,...,i_m\}\subset \{1,\ldots,m+1\},\\
                                               &p_{i_1},...,p_{i_m}\mbox{ are linearly independent}\}.
\end{align*}\par
First, for arbitrary point $ q=(q_1,...,q_{m})\in W_{m,m}(S^m)$,
\[
\pi^{-1}(q)=\{(q_1,...,q_{m},x)\mid\mbox{any }m\mbox{ points of the set }\{q_1,...,q_{m},x\}\mbox{ are linearly independent}\},
\]
there is a homeomorphism
\begin{align*}
\pi^{-1}(q)&\rightarrow Q_{m-1,m}(S^m)_q\\
(q_1,...,q_{m},x)&\mapsto (x)
\end{align*}
Now, define the local trivialization of the fiber bundle. Choose the neighbourhood of $q$ in $W_{m,m}(S^m)$ denoted by $U_{q}$ such that $U_{q}$ is small enough. It is understood that for  $x=(x_1,...,x_{m})\in U_q$, $x_1,...,x_m$ are linearly independent. \par
Take $x_{m+1}\in$ Span$\{x_1,...,x_{m}\}^{\perp}\subset\mathbb{R}^{m+1}$ such that
\[
\|x_{m+1}\|=1, \det(x_1,...,x_{m},x_{m+1})>0.
\]
The choice   of  $x_{m+1}$ makes that $x_{m+1}$ only depends on $x=(x_1,...,x_{m})$. Take the point $e_{m+1}=(0,...,0,1)\in \mathbb{R}^{m+1}$. It's easily seen that $\{x_1,...,x_{m+1}\}$ and  $\{e_1,...,e_{m+1}\}$ both are basis of $\mathbb{R}^{m+1}$, then there is a linear transformation $\varphi_{x}:\mathbb{R}^{m+1}\rightarrow \mathbb{R}^{m+1}$ defined by
\[
(\varphi_{x}(e_1),...,\varphi_{x}(e_{m}),\varphi_{x}(e_{m+1}))=(x_1,...,x_{m},x_{m+1}).
\]
Define the map $\psi:\mathbb{R}^{m+1}\rightarrow \mathbb{R}^{m+1}$ by $z\mapsto \frac{z}{\|z\|}$ and  we obtain the map $f_x:S^m\rightarrow S^m$ defined by
\begin{align*}
f_{x}=\psi\circ\varphi_{x}\mid_{S^{m}}:S^{m}&\rightarrow S^{m}\\
\lambda_1e_1+\ldots+\lambda_{m}e_{m}+\lambda_{m+1}e_{m+1}&\mapsto \frac{\lambda_1x_1+\ldots+\lambda_{m}x_{m}+\lambda_{m+1}x_{m+1}}{\| \lambda_1x_1+\ldots+\lambda_{m}x_{m}+\lambda_{m+1}x_{m+1}\|}
\end{align*}
where $\lambda_i\in \mathbb{R}, 1\leq i\leq {m+1}$.  Recall the definition of  the space $Q_{m-1,m}(S^{m})_x$ and $Q_{m-1,m}(S^{m})_{b_m}$,
 \begin{align*}
 &Q_{m-1,m}(S^{m})_x\\
 =&\bigcup_{\{i_1,...,i_{m-1}\}\subset\{1,...,{m}\}}Q_{m}^{i_1,...,i_{m-1}}(S^{m})_x\\
          =&\bigcup_{\{i_1,...,i_{m-1}\}\subset\{1,...,{m}\}}\big(\mbox{Span}\{x_{i_1},...,x_{i_{m-1}}\}\cap S^{m}\big)\\
          =&\bigcup_{\{i_1,...,i_{m-1}\}\subset\{1,...,{m}\}}\bigg(\{\lambda_{i_1} x_{i_1}+\ldots+\lambda_{i_{m-1}} x_{i_{m-1}}\mid\lambda_{i_1},...,\lambda_{i_{m-1}}\in \mathbb{R}\}\cap S^{m}\bigg),\\
 &Q_{m-1,m}(S^{m})_{b_m}\\
 =  &\bigcup_{\{i_1,...,i_{m-1}\}\subset\{1,...,{m}\}}\bigg(\{\lambda_{i_1} e_{i_1}+\ldots+\lambda_{i_{m-1}} e_{i_{m-1}}\mid\lambda_{i_1},...,\lambda_{i_{m-1}}\in \mathbb{R}\} \cap S^{m}\bigg),
 \end{align*}
then we obtain that
\[
f_{x}\mid_{S^m-Q_{m-1,m}(S^{m})_{b_m}}:S^m-Q_{m-1,m}(S^{m})_{b_m}\rightarrow S^m-Q_{m-1,m}(S^{m})_x
\]
 is a homeomorphism.\par
Define the local trivialization of the fiber bundle as follows:
\begin{align*}
h:\pi^{-1}(U_q)&\rightarrow U_{q}\times (S^m-Q_{m-1,m}(S^{m})_{b_m})\\
(x_1,...,x_{m},y)&\mapsto (x_1,...,x_{m},f^{-1}_{x}(y))
\end{align*}
and  $h^{-1}(x_1,...,x_{m},y^{'})=(x_1,...,x_{m},f_{x}(y^{'}))$.\par
It's not hard to verify  the following diagram commutative.
\[
\xymatrix{
 \pi^{-1}(U_q) \ar[d]_{\pi} \ar[r]^-{h} & U_{q}\times (S^m-Q_{m-1,m}(S^{m})_{b_m}) \ar[dl]^{\mbox{projection}}      \\
  U_q                      }
                \]
Then the proof for the case $k=m$ is completed. For the case $k=m+1$, the proof is similar.
\end{proof}

\begin{theorem}\label{c-s}
Let $k=m$ or $m+1$. The fiber bundle
 \[
 S^m-Q_{k-1,k}(S^{m})_{b_k} \hookrightarrow W_{k,k+1}(S^m)\xrightarrow{\pi} W_{k,k}(S^m)
 \]
admits a cross-section.
\end{theorem}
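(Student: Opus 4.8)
The plan is to exhibit an explicit section $s\colon W_{k,k}(S^m)\to W_{k,k+1}(S^m)$ in each of the two cases. A cross-section amounts to a continuous rule assigning to every $p=(p_1,\dots,p_k)\in W_{k,k}(S^m)$ one extra point $s(p)\in S^m$ for which $(p_1,\dots,p_k,s(p))\in W_{k,k+1}(S^m)$; once such a point is produced, $\pi\circ s=\mathrm{id}$ holds automatically. Unwinding the membership condition, $s(p)$ must be a unit vector avoiding the span of every $(k-1)$-subset of $\{p_1,\dots,p_k\}$, i.e. $s(p)\notin Q_{k-1,k}(S^m)_p$, and then every $k$-subset of $\{p_1,\dots,p_k,s(p)\}$ is automatically linearly independent.

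For $k=m$ I would reuse the vector $x_{m+1}$ already built in the proof of Theorem \ref{bundle}: the unit normal to $\mathrm{Span}\{p_1,\dots,p_m\}$ normalized so that $\det(p_1,\dots,p_m,x_{m+1})>0$. To make continuity and the sign choice transparent, I would realize it as the normalized generalized cross product $c(p)$ characterized by $\langle c(p),w\rangle=\det(p_1,\dots,p_m,w)$ for all $w\in\mathbb{R}^{m+1}$; its coordinates are the signed $m\times m$ minors of the matrix whose rows are $p_1,\dots,p_m$, so $c$ depends polynomially, in particular continuously, on $p$. Since the $p_i$ are linearly independent, $c(p)\ne 0$, and $\det(p_1,\dots,p_m,c(p))=\|c(p)\|^2>0$; moreover $c(p)\perp p_i$ for every $i$, so $c(p)$ lies in no span of any subset of the $p_i$. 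Then $s(p)=(p_1,\dots,p_m,\,c(p)/\|c(p)\|)$ lands in $W_{m,m+1}(S^m)$ and is continuous.

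For $k=m+1$ the tuple $p_1,\dots,p_{m+1}$ is a basis of $\mathbb{R}^{m+1}$, and the obstacle set $Q_{m,m+1}(S^m)_p$ is the union of the $m+1$ hyperplanes $H_j=\mathrm{Span}\{p_i:i\ne j\}$. A vector lies off every $H_j$ precisely when all of its coordinates in the basis $\{p_i\}$ are nonzero, so the simplest universal choice is $v(p)=p_1+\dots+p_{m+1}$, whose coordinates are all equal to $1$. Linear independence of the $p_i$ forces $v(p)\ne 0$, so $s(p)=\bigl(p_1,\dots,p_{m+1},\,v(p)/\|v(p)\|\bigr)$ is a well-defined continuous section meeting no $H_j$; note it sends $b_{m+1}$ to $\tilde{b}_{m+1}$, consistent with the base points used later.

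The only genuine points to verify are (i) for $k=m$, that the sign of the normal is pinned down continuously, which the cross-product description handles by removing the ambiguity entirely and exhibiting polynomial dependence on $p$; and (ii) for $k=m+1$, that $v(p)$ never vanishes and never meets an $H_j$, both immediate from linear independence of $p_1,\dots,p_{m+1}$. I expect the main (and fairly mild) obstacle to be the bookkeeping confirming that the added point avoids the span of every $(k-1)$-subset, i.e. that $s(p)\in W_{k,k+1}(S^m)$, which in each case reduces to the nonvanishing statements just described.
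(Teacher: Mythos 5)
Your proposal is correct and takes essentially the same route as the paper: exhibit an explicit section by appending a normalized vector that avoids $Q_{k-1,k}(S^m)_p$, the only point to check being that the appended vector lies off the span of every $(k-1)$-subset. The paper's proof uses the normalized sum $\frac{p_1+\cdots+p_k}{\|p_1+\cdots+p_k\|}$ in both cases $k=m$ and $k=m+1$ (which also works for $k=m$, since the sum has nonzero coefficient on the omitted $p_i$), while your cross-product section for $k=m$ is precisely the alternative cross-section $\mathcal{K}$ recorded in Remark \ref{cross}.
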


\begin{proof}
By Theorem \ref{bundle}, if $k=m$ or $m+1$, then we obtain the fiber bundle.  Define a map as follows:
\begin{align*}
f:W_{k,k}(S^m)&\rightarrow W_{k,k+1}(S^m)\\
(p_1,...,p_{k})&\mapsto (p_1,...,p_{k},\frac{p_1+\ldots+p_{k}}{\|p_1+\ldots+p_{k}\|}).
\end{align*}
It is understood  that  $p_1,...,p_{k}$ are linear independent, then any $k$ vectors  of the set $\{p_1,...,p_{k},\frac{p_1+\ldots+p_{k}}{\|p_1+\ldots+p_{k}\|}\}$ are linear independent, thus
\[
 (p_1,...,p_{k},\frac{p_1+\ldots+p_{k}}{\|p_1+\ldots+p_{k}\|})\in W_{k,k+1}(S^m).
  \]
  It is easily seen that $\pi\circ f=id_{W_{k,k}(S^{k})}$, then it's a cross-section of the fiber bundle.
\end{proof}
\begin{remark}\label{cross}
For arbitrary point $p=(p_1,...,p_m)\in W_{m,m}(S^m)$ where $p_i$ is denoted by $p_i=(p_i^1,...,p_i^{m+1})\in S^m,1\leq i\leq m$, define the  matrices
\[
D^{\hat{j}}_p=
\left(
  \begin{array}{ccc}
    p_{1}^1& \ldots & p_m^1 \\
    \vdots & \vdots & \vdots \\
    p_{1}^{j-1}& \ldots & p_m^{j-1} \\
    p_1^{j+1}& \ldots & p_m^{j+1}\\
    \vdots & \vdots & \vdots\\
   p_1^{m+1}& \ldots & p_m^{m+1}\\
  \end{array}
\right), 1\leq j\leq m+1.
\]
We obtain that the fiber bundle
\[
S^m-Q_{m-1,m}(S^{m})_{b_m} \hookrightarrow W_{m,m+1}(S^m)\xrightarrow{\pi} W_{m,m}(S^m)
\]
admits another cross-section  $\mathcal{K}$ defined as follows.
\begin{align*}
\mathcal{K}: W_{m,m}(S^m)&\rightarrow W_{m,m+1}(S^m)\\
(p_1,...,p_m) &\mapsto (p_1,...,p_m, \frac{y}{  \parallel y \parallel })
\end{align*}
where $y= ((-1)^{1+m+1}\det D^{\hat{1}}_p,...,(-1)^{j+m+1}\det D^{\hat{j}}_p,...,(-1)^{m+m+1}\det D^{\hat{m}}_p)$ which means
\[
y\in \mbox{Span}\{p_1,...,p_m\}^{\perp}\subset \mathbb{R}^{m+1}, \det (p_1,...,p_m,y)>0.
\]
It is not hard to verify that the map $\mathcal{K}$ is a cross-section.
\end{remark}
Let $M=\mathbb{R}^{m+1},\mathbb{R}P^m$, the similar argument as in the proof of Theorem \ref{bundle} show that the following theorems.
\begin{theorem}
 Let the map $\pi:W_{k,k+1}(\mathbb{R}^{m+1})\rightarrow W_{k,k}(\mathbb{R}^{m+1})$ be defined by
\begin{align*}
\pi(p_1,...,p_{k},p_{k+1})=(p_1,...,p_{k}).
\end{align*}
If $k=m$ or $m+1$, then  $\pi:W_{k,k+1}(\mathbb{R}^{m+1})\rightarrow W_{k,k}(\mathbb{R}^{m+1})$ is a fiber bundle  with the fiber $\mathbb{R}^{m+1}-Q_{k-1,k}(\mathbb{R}^{m+1})_{b_k}$ and the fiber bundle admits a cross-section.
\end{theorem}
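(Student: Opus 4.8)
The plan is to follow the proof of Theorem \ref{bundle} essentially verbatim, with one genuine simplification: since $\mathbb{R}^{m+1}$ is itself the ambient vector space, the normalization map $\psi$ used in the sphere case disappears and the local trivializations can be built directly from linear isomorphisms. First I would identify the fibers. For $q=(q_1,\dots,q_k)\in W_{k,k}(\mathbb{R}^{m+1})$, a vector $x\in\mathbb{R}^{m+1}$ completes $q$ to a point of $W_{k,k+1}(\mathbb{R}^{m+1})$ precisely when $x$ together with any $k-1$ of the $q_i$ is linearly independent, that is, when $x\notin Q_{k-1,k}(\mathbb{R}^{m+1})_q$ in the sense of Definition \ref{Q}. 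Hence $(q_1,\dots,q_k,x)\mapsto x$ gives a homeomorphism $\pi^{-1}(q)\cong \mathbb{R}^{m+1}-Q_{k-1,k}(\mathbb{R}^{m+1})_q$, which is the claimed fiber.

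For local triviality near $q$, I would choose a neighbourhood $U_q$ small enough that every $x=(x_1,\dots,x_k)\in U_q$ still consists of linearly independent vectors. When $k=m+1$ the tuple $x$ is already a basis of $\mathbb{R}^{m+1}$, and I take $\varphi_x$ to be the linear isomorphism sending $e_i\mapsto x_i$. When $k=m$ I first complete $x$ to a basis by appending a vector $x_{m+1}\in \mathrm{Span}\{x_1,\dots,x_m\}^{\perp}$ with $\det(x_1,\dots,x_m,x_{m+1})>0$, chosen continuously in $x$ via the explicit signed-minor formula of Remark \ref{cross}, and then let $\varphi_x$ send $e_i\mapsto x_i$. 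In either case I set $f_x:=\varphi_x$, which is already a linear self-homeomorphism of $\mathbb{R}^{m+1}$. Because $\varphi_x$ is linear with $\varphi_x(e_i)=x_i$, it carries $\mathrm{Span}\{e_{i_1},\dots,e_{i_{k-1}}\}$ onto $\mathrm{Span}\{x_{i_1},\dots,x_{i_{k-1}}\}$, and therefore restricts to a homeomorphism of $\mathbb{R}^{m+1}-Q_{k-1,k}(\mathbb{R}^{m+1})_{b_k}$ onto $\mathbb{R}^{m+1}-Q_{k-1,k}(\mathbb{R}^{m+1})_x$. The local trivialization is then
\[
h:\pi^{-1}(U_q)\to U_q\times\bigl(\mathbb{R}^{m+1}-Q_{k-1,k}(\mathbb{R}^{m+1})_{b_k}\bigr),\quad (x_1,\dots,x_k,y)\mapsto (x_1,\dots,x_k,f_x^{-1}(y)),
\]
with inverse $(x_1,\dots,x_k,y')\mapsto(x_1,\dots,x_k,f_x(y'))$, and it commutes with the projections onto $U_q$, giving the fiber bundle structure.

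For the cross-section I would imitate Theorem \ref{c-s} but drop the normalization, defining
\[
f:W_{k,k}(\mathbb{R}^{m+1})\to W_{k,k+1}(\mathbb{R}^{m+1}),\quad (p_1,\dots,p_k)\mapsto (p_1,\dots,p_k,\,p_1+\dots+p_k).
\]
Since $p_1,\dots,p_k$ are independent their sum is nonzero, and if one deletes some $p_j$ from $\{p_1,\dots,p_k,p_1+\dots+p_k\}$, then in any linear relation the coefficient of $p_j$ coincides with the coefficient of the sum, forcing the latter to vanish; the remaining relation then forces all coefficients to vanish. Thus $f$ lands in $W_{k,k+1}(\mathbb{R}^{m+1})$ and $\pi\circ f=\mathrm{id}$, so $f$ is a cross-section.

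The independence verifications just sketched are routine. The only step deserving real care is the continuity, together with continuous invertibility, of the assignment $x\mapsto\varphi_x$ entering the trivialization. For $k=m+1$ this is immediate, as $\varphi_x$ is the matrix whose columns are the $x_i$ and these vary continuously. For $k=m$ the one substantive issue is the continuous choice of the completing vector $x_{m+1}$, which is exactly what Remark \ref{cross} already supplies: the signed minors $(-1)^{j+m+1}\det D^{\hat{j}}_p$ produce a vector in the positively oriented orthogonal complement depending polynomially, hence continuously, on $x$. Once that continuity is established, the remaining verifications are formal and the argument runs identically to the $S^m$ case.
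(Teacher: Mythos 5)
Your proposal is correct and follows essentially the same route as the paper, which for this theorem simply defers to "the similar argument as in the proof of Theorem \ref{bundle}": you adapt that proof by dropping the normalization $\psi$, building $f_x=\varphi_x$ from the (continuously chosen) basis completion, and replacing the normalized sum in the cross-section by $p_1+\dots+p_k$. All the verifications you sketch (the fiber identification, the span-preservation argument, and the independence check for the section) go through as stated.
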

\begin{theorem}
 Let the map $\pi:W_{m+1,m+3}(\mathbb{R}P^m)\rightarrow W_{m+1,m+2}(\mathbb{R}P^m)$ be defined by
\begin{align*}
\pi(p_1,...,p_{m+2},p_{m+3})=(p_1,...,p_{m+2})
\end{align*}
Then $\pi:W_{m+1,m+3}(\mathbb{R}P^m)\rightarrow W_{m+1,m+2}(\mathbb{R}P^m)$ is a fiber bundle  with the fiber $\mathbb{R}P^m-Q_{m,m+2}(\mathbb{R}P^m)_{([e_1],...,[e_{m+1}], [e_1+...+e_{m+1}])}$.
\end{theorem}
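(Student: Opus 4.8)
The plan is to mirror the proof of Theorem \ref{bundle}, replacing the orthogonal-complement frame used there for $S^m$ by the projective frame supplied by the fundamental theorem of projective geometry. Throughout, write $b=([e_1],\dots,[e_{m+1}],[e_1+\dots+e_{m+1}])$ for the base point of $W_{m+1,m+2}(\mathbb{R}P^m)$. The reason the extra $(m+2)$-nd point appears here, in contrast to the $S^m$ statement, is that a projectivity of $\mathbb{R}P^m$ carries one more degree of freedom than a linear map of $\mathbb{R}^{m+1}$, so $m+2$ points in general position (rather than $m+1$) are needed to rigidify the frame.

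First I would identify the fibers. For $q=(q_1,\dots,q_{m+2})\in W_{m+1,m+2}(\mathbb{R}P^m)$, a point $x\in\mathbb{R}P^m$ satisfies $(q_1,\dots,q_{m+2},x)\in W_{m+1,m+3}(\mathbb{R}P^m)$ exactly when every $m+1$ of these $m+3$ points are linearly independent. The conditions not involving $x$ already hold because $q\in W_{m+1,m+2}(\mathbb{R}P^m)$, so this is equivalent to requiring that $x$ together with any $m$ of $q_1,\dots,q_{m+2}$ be linearly independent, i.e. $x\notin Q_{m,m+2}(\mathbb{R}P^m)_q$. Hence $\pi^{-1}(q)\cong \mathbb{R}P^m-Q_{m,m+2}(\mathbb{R}P^m)_q$, and over $b$ this is precisely the asserted fiber.

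Next comes the local trivialization. Over a small neighbourhood $U_b$ of $b$ I would choose continuous representatives $v_1(q),\dots,v_{m+1}(q)\in\mathbb{R}^{m+1}$ of $q_1,\dots,q_{m+1}$ (for instance by dehomogenizing in the coordinate that is nonzero near $[e_i]$, which is possible locally) together with a representative $w(q)$ of $q_{m+2}$. Since $q_1,\dots,q_{m+1}$ are linearly independent they form a basis, so $w(q)=\sum_{i=1}^{m+1}\lambda_i(q)\,v_i(q)$ with coefficients depending continuously on $q$ by Cramer's rule, and the general-position hypothesis on $q$ forces every $\lambda_i(q)\neq 0$. Setting $\varphi_q(e_i)=\lambda_i(q)\,v_i(q)$ defines an invertible linear map $\varphi_q\in GL(m+1,\mathbb{R})$, continuous in $q$, with $\varphi_q(e_1+\dots+e_{m+1})=w(q)$; its projectivization $f_q:\mathbb{R}P^m\to\mathbb{R}P^m$ therefore sends $b_i\mapsto q_i$ for all $1\le i\le m+2$. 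Because $f_q$ is a projective linear isomorphism it carries linearly dependent collections to linearly dependent collections, so $f_q(Q_{m,m+2}(\mathbb{R}P^m)_b)=Q_{m,m+2}(\mathbb{R}P^m)_q$ and $f_q$ restricts to a homeomorphism $\mathbb{R}P^m-Q_{m,m+2}(\mathbb{R}P^m)_b\to \pi^{-1}(q)$.

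With this in hand I would define
\[
h:\pi^{-1}(U_b)\to U_b\times\big(\mathbb{R}P^m-Q_{m,m+2}(\mathbb{R}P^m)_b\big),\qquad (q,y)\mapsto (q,f_q^{-1}(y)),
\]
with inverse $(q,y')\mapsto(q,f_q(y'))$, and verify continuity, the fiber-preserving property, and commutativity with $\pi$ exactly as in the diagram of Theorem \ref{bundle}. The main obstacle is the construction of $\varphi_q$: unlike the $S^m$ case, where the frame was rigidified by orthogonality and a positivity of determinant, here the projective scalar ambiguity must be eliminated, and it is precisely the $(m+2)$-nd point $[e_1+\dots+e_{m+1}]$ together with the general-position hypothesis (guaranteeing all $\lambda_i(q)\neq 0$) that pins it down and makes $q\mapsto f_q$ a continuous, well-defined assignment on $U_b$.
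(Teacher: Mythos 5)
Your proposal is correct and takes the same route as the paper, whose proof is only a two-line sketch observing that the $m+2$ points are in general position and that the local trivialization is obtained ``via the projective transformation''; your construction of $\varphi_q$ (rigidified by the $(m+2)$-nd point, with all $\lambda_i(q)\neq 0$ forced by the general-position hypothesis) is precisely the intended content, worked out in full. The only cosmetic point is that you state the trivialization over a neighbourhood of the base point $b$, whereas local triviality is needed near every $q^0\in W_{m+1,m+2}(\mathbb{R}P^m)$; your construction applies verbatim there, since continuous homogeneous representatives exist locally around any point of $\mathbb{R}P^m$.
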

\begin{proof}
For arbitrary point $p=(p_1,...,p_{m+2})\in W_{m+1,m+2}(\mathbb{R}P^m)$, it is known that $p_1,...,p_{m+2}\in \mathbb{R}P^m$ are on the general position, then we obtain the local trivialization of the fiber bundle via the projective transformation.
\end{proof}

\section{The calculation of Homotopy groups}\label{homotopy}
In this section, we shall  calculate  homotopy groups of  generalized configuration spaces $W_{k,n}(M)$ for some special cases. If $k=1$, then $W_{1,n}(M)$ is the product space of $M-\{0\}$, we obtain that
\[
\pi_pW_{1,n}(M)\cong  (\pi_p(M-\{0\}))^n, p\geq 1,
\]
so in this section, we will assume throughout that $k\geq 2$.\par
\subsection{ Case 1: $M=S^m$}
Let $M=S^m$, the following lemma is a direct result by Lemma \ref{dr}.
\begin{lemma}\label{k}
$\pi_p(W_{n,n}(S^m))\cong \pi_p (V_{m+1,n}), n\leq m+1, p\geq 1.$
\end{lemma}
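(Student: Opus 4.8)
The statement to prove is Lemma~\ref{k}: that $\pi_p(W_{n,n}(S^m)) \cong \pi_p(V_{m+1,n})$ for $n \leq m+1$ and $p \geq 1$. This is flagged as ``a direct result by Lemma~\ref{dr}'', so the proof should be very short, and the plan is simply to invoke the earlier structural result and the homotopy invariance of the relevant algebraic gadget.

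The plan is to recall that Lemma~\ref{dr} already establishes, for $n \leq m+1$, that the Stiefel manifold $V_{m+1,n}$ is a deformation retract of the generalized configuration space $W_{n,n}(S^m)$, with an explicit deformation retraction $H$ built from the Gram--Schmidt process followed by unitization. First I would observe that a deformation retraction is in particular a homotopy equivalence between $W_{n,n}(S^m)$ and its retract $V_{m+1,n}$. Then I would appeal to the standard fact that a homotopy equivalence $f \colon X \to Y$ induces isomorphisms $f_* \colon \pi_p(X) \to \pi_p(Y)$ on all homotopy groups for every $p \geq 1$ (with respect to compatible basepoints, the retraction and inclusion being basepoint-preserving up to the obvious choice). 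Composing these observations yields $\pi_p(W_{n,n}(S^m)) \cong \pi_p(V_{m+1,n})$ for all $p \geq 1$, which is exactly the claim.

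Concretely, the inclusion $\iota \colon V_{m+1,n} \hookrightarrow W_{n,n}(S^m)$ and the retraction $r \colon W_{n,n}(S^m) \to V_{m+1,n}$ furnished by $H$ at time $t=1$ satisfy $r \circ \iota = \mathrm{id}$ and $\iota \circ r \simeq \mathrm{id}$, so the induced maps on $\pi_p$ are mutually inverse isomorphisms. One technical point worth noting is basepoint behaviour: the Gram--Schmidt homotopy $H$ fixes points already lying in $V_{m+1,n}$, so if the basepoint is chosen inside the Stiefel manifold the retraction is basepoint-preserving and no further care is needed.

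The only conceivable obstacle is purely formal rather than mathematical: one must make sure the hypothesis $n \leq m+1$ matches exactly the range in Lemma~\ref{dr}, which it does, and that the deformation retraction is genuinely basepoint-compatible so that the isomorphism holds for the based homotopy groups $\pi_p$. Since Lemma~\ref{dr} supplies all the geometric content, there is no hard step here; the lemma is essentially a one-line corollary obtained by applying homotopy invariance of $\pi_p$ to the established deformation retraction.
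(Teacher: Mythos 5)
Your proposal is correct and follows exactly the paper's route: the paper states the lemma as ``a direct result by Lemma~\ref{dr}'', i.e.\ it invokes the deformation retraction of $W_{n,n}(S^m)$ onto $V_{m+1,n}$ and the homotopy invariance of $\pi_p$, precisely as you do. Your added remarks on basepoints and on $r\circ\iota=\mathrm{id}$, $\iota\circ r\simeq\mathrm{id}$ only make explicit what the paper leaves implicit.
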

First, consider the case $k=m$, note that $m \geq 2$, let $b_m=(e_1,...,e_m)$ be the base point of $W_{m,m}(S^m)$, $b_{m+1}=(e_1,...,e_{m+1})$ be the base point of $W_{m,m+1}(S^m)$ and  $e_{m+1}$ be the base point of $S^m-Q_{m-1,m}(S^m)_{b_m}$. The following theorem shows the homotopy groups of  $W_{m,m+1}(S^m)$.
\begin{theorem}\label{m}
\[
\pi_p(W_{m,m+1}(S^m))\cong  \pi_p(V_{m+1,m}) \oplus  \pi_p(S^m-Q_{m-1,m}(S^m)_{b_m}),p\geq 1.
\]
\end{theorem}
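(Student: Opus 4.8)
The plan is to feed the fiber bundle provided by Theorem \ref{bundle} in the case $k=m$,
\[
S^m - Q_{m-1,m}(S^m)_{b_m} \hookrightarrow W_{m,m+1}(S^m) \xrightarrow{\pi} W_{m,m}(S^m),
\]
into its long exact homotopy sequence and to split that sequence using the cross-section of Theorem \ref{c-s}. Writing $F = S^m - Q_{m-1,m}(S^m)_{b_m}$ and using the compatible base points $b_{m+1}$, $b_m$, $e_{m+1}$ fixed above (so that $\pi(b_{m+1})=b_m$ and the fiber inclusion carries $e_{m+1}$ to $b_{m+1}$), I would record the exact sequence
\[
\cdots \to \pi_p(F) \xrightarrow{i_*} \pi_p(W_{m,m+1}(S^m)) \xrightarrow{\pi_*} \pi_p(W_{m,m}(S^m)) \to \cdots .
\]
Since the cross-section $f$ of Theorem \ref{c-s} satisfies $\pi \circ f = \mathrm{id}$, its induced map is a right inverse of $\pi_*$; hence $\pi_*$ is onto, the connecting homomorphisms vanish, and the sequence collapses into split short exact sequences
\[
0 \to \pi_p(F) \xrightarrow{i_*} \pi_p(W_{m,m+1}(S^m)) \xrightarrow{\pi_*} \pi_p(W_{m,m}(S^m)) \to 0 .
\]

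For $p \geq 2$ the three groups are abelian, so a split short exact sequence yields the internal direct sum $\pi_p(W_{m,m+1}(S^m)) \cong \pi_p(F) \oplus \pi_p(W_{m,m}(S^m))$. Substituting Lemma \ref{k}, which identifies $\pi_p(W_{m,m}(S^m))$ with $\pi_p(V_{m+1,m})$, produces the asserted formula in every degree $p \geq 2$.

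The main obstacle is the degree $p=1$: a split short exact sequence of possibly non-abelian groups only yields a semidirect product, whereas the statement demands a direct sum (note that $\pi_1(F)$ is genuinely non-abelian, e.g.\ free of rank $3$ when $m=2$). To close this gap I would show that the bundle is in fact \emph{globally} trivial. The key observation is that the unit normal $x=(x_1,\ldots,x_m)\mapsto x_{m+1}$, defined as the vector in $\mathrm{Span}\{x_1,\ldots,x_m\}^{\perp}$ with $\|x_{m+1}\|=1$ and $\det(x_1,\ldots,x_m,x_{m+1})>0$ — the same cross product underlying the section $\mathcal{K}$ — is continuous on \emph{all} of $W_{m,m}(S^m)$, not merely on the neighborhood $U_q$ used in the proof of Theorem \ref{bundle}. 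Consequently the fiberwise homeomorphisms $f_x$ assemble into a single map, and
\[
(x_1,\ldots,x_m,y) \mapsto \big(x_1,\ldots,x_m,\, f_x^{-1}(y)\big)
\]
is a homeomorphism $W_{m,m+1}(S^m) \xrightarrow{\ \cong\ } W_{m,m}(S^m) \times F$. Since the homotopy groups of a product are the direct product of the factors' homotopy groups in every degree $p \geq 1$ — abelian direct sums for $p \geq 2$ and a genuine direct product of groups for $p=1$ — a final application of Lemma \ref{k} gives the claimed isomorphism uniformly for all $p \geq 1$. I expect the only remaining work to be the routine verification that $f_x^{-1}(y)$ lands in $F$ and depends continuously on $(x_1,\ldots,x_m,y)$, which proceeds exactly as in the local argument of Theorem \ref{bundle}.
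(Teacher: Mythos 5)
Your proposal is correct, and for the part of the argument that actually carries the weight of the theorem it takes a genuinely different route from the paper. Both proofs begin identically: the fibration of Theorem \ref{bundle} gives a long exact sequence, the cross-section of Theorem \ref{c-s} splits it into short exact sequences, and for $p\geq 2$ the abelian splitting plus Lemma \ref{k} finishes the job. The divergence is at $p=1$, which you rightly flag as the real issue. The paper stays inside the split short exact sequence: it identifies $\pi_1(W_{m,m+1}(S^m))$ as a semidirect product, takes the generator $\alpha$ of $\pi_1(W_{m,m}(S^m))\cong\pi_1(V_{m+1,m})\cong\mathbb{Z}_2$, and writes down an explicit homotopy $\mathcal{K}_*(\alpha)\circ i_*(x)\circ\mathcal{K}_*(\alpha^{-1})\sim i_*(x)$ (using the rotation matrices $r(t)$ and the section $\mathcal{K}$ of Remark \ref{cross}) to show the conjugation action on $\pi_1$ of the fiber is trivial, so the semidirect product is direct. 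You instead observe that the ``local'' trivialization in the proof of Theorem \ref{bundle} never uses the smallness of $U_q$: the normal vector $x_{m+1}$ is given globally and continuously by the cofactor formula of Remark \ref{cross}, so the maps $f_x$ assemble into a global homeomorphism $W_{m,m+1}(S^m)\cong W_{m,m}(S^m)\times\bigl(S^m-Q_{m-1,m}(S^m)_{b_m}\bigr)$, and all homotopy groups split as direct products in one stroke. This is a valid and in fact stronger conclusion (triviality of the bundle, not merely of the $\pi_1$-action), and it makes the section and the monodromy computation unnecessary. What the paper's route buys in exchange is a technique that does not depend on the bundle being trivial: the explicit-homotopy computation of the action is reused later (e.g.\ in verifying the relations in the presentation of $\pi_1(W_{3,3}(\mathbb{R}P^3))$), and the local-trivialization framing is the one that survives in the $\mathbb{R}P^m$ case, where no global normal section is available.
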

\begin{proof}
 By Theorem \ref{bundle}, the fiber bundle
\[
S^m-Q_{m-1,m}(S^m)_{b_m} \hookrightarrow W_{m,m+1}(S^{m}) \xrightarrow{\pi} W_{m,m}(S^{m})
\]
induces a long exact sequence
\begin{align*}
\cdots\rightarrow \pi_p(S^m-Q_{m-1,m}(S^m)_{b_m})\xrightarrow{i_*} \pi_p(W_{m,m+1}(S^{m}))\xrightarrow{\pi_*} \pi_p(W_{m,m}(S^m))\rightarrow\cdots
\end{align*}
where $i_*$ is induced by the inclusion map
\begin{align*}
i:S^m-Q_{m-1,m}(S^m)_{b_m} &\rightarrow W_{m,m+1}(S^{m})\\
          x&\mapsto (e_1,...,e_m,x)
\end{align*}
and $\pi_*$ is induced by the projection map
\begin{align*}
\pi:W_{m,m+1}(S^m)&\rightarrow W_{m,m}(S^m)\\
(x_1,...,x_m,x_m+1)&\mapsto (x_1,...,x_m).
\end{align*}

By Theorem \ref{c-s}, the fiber bundle admits a cross-section, following Sze-Tsen Hu \cite[Proposition V6.2]{hu}, we  obtain that the following  exact sequence is spilt for $p\geq 1$.
\begin{align*}
1\rightarrow \pi_{p}(S^m-Q_{m-1,m}(S^m)_{b_m},e_{m+1})\xrightarrow {i_*} \pi_{p}(W_{m,m+1}&(S^m),b_{m+1})\\
                                                                                       &\xrightarrow{\pi_*} \pi_{p}(W_{m,m}(S^m),b_m)\rightarrow 1.
\end{align*}
Then if $p\geq 2$, by the split exact sequence, we obtain
\[
\pi_{p}(W_{m,m+1}(S^m))\cong \pi_{p}(W_{m,m}(S^m))\oplus \pi_{p}(Q_{m-1,m}(S^m)_{b_m})), p\geq 2.
\]\par
If $p=1$,  then $\pi_{1}(W_{m,m+1}(S^m),b_m)$ is isomorphic to the semi-direct product of $\pi_{1}(S^m-Q_{m-1,m}(S^m)_{b_m},e_{m+1})$ and $\pi_{1}(W_{m,m}(S^m),b_m)$ corresponding the homomorphism
\[
\eta: \pi_{1}(W_{m,m}(S^m),b_m)\rightarrow \mbox{ Aut}(\pi_{1}(S^m-Q_{m-1,m}(S^m)_{b_m}),e_{m+1}).
\]
To complete the proof, it is necessary to prove that the semi-direct product is a direct product. \par
By Lemma \ref{k}, we have $\pi_1(W_{m,m}(S^m),b_m)\cong \pi_1(V_{m+1,m},b_m)\cong \mathbb{Z}_2, m\geq 2$, then denote  the generator of $\pi_{1}(W_{m,m}(S^m),(e_1,..,e_m))$ by $\alpha$ which is given by
\[
\alpha(t)=(r(t)e_1,...,r(t)e_m)
\]
where $r(t)$ is defined by
\[
r(t)=
\left(
  \begin{array}{ccccccc}
      I_{m-1} &0              &0\\
      0       & \cos 2\pi t &\sin 2\pi t \\
       0            &-\sin 2\pi t &\cos 2\pi t
  \end{array}
\right), 0\leq t\leq 1.
\]
Here $I_{m-1}$ is the $(m-1)$-rowed identity matrix. Denote the elements of the fundamental group $\pi_{1}(S^m-Q_{m-1,m}(S^m)_{b_m},e_{m+1})$ by $x$ which is given by $x(t)$ where
\[
x(0)=x(1)=e_{m+1}, x(t)\in S^m-Q_{m-1,m}(S^m)_{b_m}.
\]
Consider the homomorphism
\[
\mathcal{K}_{*}:\pi_1(W_{m,m}(S^m),(e_1,...,e_m))\rightarrow\pi_1(W_{m,m+1}(S^m),(e_1,...,e_m,e_{m+1}))
\]
induced by the cross-section $\mathcal{K}$  defined in Remark \ref{cross}. Hence to show that  the semi-direct product is a direct product, it is only necessary to  prove $x^{\eta(\alpha)}=x$ which means
\[
\mathcal{K}_*(\alpha) \circ i_*(x) \circ \mathcal{K}_*(\alpha^{-1})\sim i_*(x).
\]
By the definition of  cross-section $\mathcal{K}$ in Remark \ref{cross} and the inclusion map, we obtain
\[
\mathcal{K}_*(\alpha(t))=\mathcal{K}_*(e_1,...,e_{m-1},r(t)e_m)=(e_1,...,e_{m-1},r(t)e_m,r(t)e_{m+1})
\]
and the image of $x$ under the  homomorphism induced by the inclusion map is
\[
i_*(x(t))=(e_1,...,e_m,x(t)).
\]
It is understood that  the product path
\[
 \mathcal{K}_*(\alpha) \circ i_*(x) \circ \mathcal{K}_*(\alpha^{-1}) =
\begin{cases}
(e_1,...,e_{m-1},r(3t)e_m,r(3t)e_{m+1}),  &0\leq t\leq \frac{1}{3},\\
(e_1,...,e_{m},x(3t-1)),    &\frac{1}{3}\leq t\leq \frac{2}{3},\\
 (e_1,...,e_{m-1},r(3-3t)e_m,r(3-3t)e_{m+1}),   &\frac{2}{3}\leq t\leq 1.
\end{cases}
\]
Define the homotopy $H:[0,1]\times [0,1] \rightarrow \pi_1(W_{m,m+1}(S^m),b_{m+1})$ by
\[
H(s,t)=
\begin{cases}
(e_1,...,e_{m-1},r(3t(1-s))e_m,r(3t(1-s))e_{m+1}),   &0\leq t\leq \frac{1}{3},\\
(e_1,...,e_{m-1},r(1-s)e_m,r(1-s)x(3t-1)),    &\frac{1}{3}\leq t\leq \frac{2}{3},\\
 (e_1,...,e_{m-1},r((3-3t)(1-s))e_m,r((3-3t)(1-s))e_{m+1}),  &\frac{2}{3}\leq t\leq 1.
\end{cases}
\]
It's not hard to verify that $H(s,t)\in \pi_1(W_{m,m+1}(S^m),b_{m+1})$. Because that $H(1,t)=i_*(x(t))$, we obtain
\[
\mathcal{K}_*(\alpha) \circ i_*(x) \circ \mathcal{K}_*(\alpha^{-1})\sim i_*(x),
\]
then the semi-direct product is the direct product of the groups $\pi_1(W_{m,m}(S^m))$ and $\pi_{1}(S^m-Q_{m-1,m}(S^m)_{b_m})$.\\
Thus we obtain
\[
\pi_p(W_{m,m+1}(S^m))\cong  \pi_p(W_{m,m}(S^m)) \oplus  \pi_p(S^m-Q_{m-1,m}(S^m)_{b_m}), p\geq 1.
\]
By Lemma \ref{k}, we obtain that $\pi_p(W_{m,m}(S^m))\cong \pi_p(V_{m+1,m}), p\geq 1$, then
\begin{align*}
\pi_p(W_{m,m+1}(S^m))\cong  \pi_p(V_{m+1,m}) \oplus  \pi_p(S^m-Q_{m-1,m}(S^m)_{b_m}), p\geq 1.
\end{align*}
\end{proof}
As an example let $k=m=3$.  The homotopy groups of fiber $S^3-Q_{2,3}(S^3)_{b_3}$ are as follows.
\begin{lemma}\label{tree}

\[
\pi_p(S^3-Q_{2,3}(S^3)_{b_3})\cong
\begin{cases}
 \ast_7\mathbb{Z},& p=1,\\
 1,                &p\geq 2.
\end{cases}
\]

\end{lemma}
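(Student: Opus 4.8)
The plan is to identify the fiber $S^3-Q_{2,3}(S^3)_{b_3}$ geometrically and then to recognize its complement as a handlebody, from which both assertions follow at once. First I would unwind the definition. With $b_3=(e_1,e_2,e_3)$ the three vectors $e_1,e_2,e_3\in\mathbb{R}^4$ span a $3$-dimensional subspace, so
\[
Q_{2,3}(S^3)_{b_3}=\big(\mbox{Span}\{e_1,e_2\}\cup\mbox{Span}\{e_1,e_3\}\cup\mbox{Span}\{e_2,e_3\}\big)\cap S^3
\]
is the union of the three coordinate great circles lying in the great $2$-sphere $\Sigma=\mbox{Span}\{e_1,e_2,e_3\}\cap S^3$. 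These three circles meet pairwise exactly in the six points $\pm e_1,\pm e_2,\pm e_3$ and cut $\Sigma$ into $8$ open triangular regions; in other words $Q:=Q_{2,3}(S^3)_{b_3}$ is precisely the $1$-skeleton of an octahedral CW-structure on $\Sigma$, with $6$ vertices, $12$ edges and $8$ two-cells. In particular $Q$ is a connected graph with first Betti number $E-V+1=12-6+1=7$ (so by Alexander duality $H_1(S^3-Q)\cong\mathbb{Z}^7$, which is a useful consistency check on the final answer).

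Next I would pass to the exterior. Let $N(Q)$ be a regular neighbourhood of $Q$ in $S^3$ and set $C=S^3-\mbox{int}\,N(Q)$. Since $N(Q)-Q$ deformation retracts onto $\partial N(Q)$, the space $S^3-Q$ deformation retracts onto $C$, so it suffices to find the homotopy type of $C$. The equatorial sphere $\Sigma$ splits $S^3$ into two closed $3$-balls $\bar{B}_+$ and $\bar{B}_-$ with $\bar{B}_+\cap\bar{B}_-=\Sigma$, and because $Q$ lies in $\Sigma=\partial\bar{B}_\pm$, removing $\mbox{int}\,N(Q)$ merely scoops a groove out of the boundary of each ball. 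I would argue that each of $\bar{B}_\pm\cap C$ is therefore again a $3$-ball and that these two balls are glued along $\Sigma\cap C=\Sigma-\mbox{int}\,N(Q)$, which is a disjoint union of $8$ closed disks, one slightly shrunk copy of each octant face.

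It then remains to observe that a space obtained from two $3$-balls by identifying $d$ disjoint disks in their boundary spheres is a handlebody of genus $d-1$: the first identification yields a ball and each of the remaining $d-1$ adds a $1$-handle (equivalently $\chi(C)=1+1-8=-6=1-7$). With $d=8$ this makes $C$ a genus-$7$ handlebody, so
\[
S^3-Q_{2,3}(S^3)_{b_3}\simeq C\simeq\bigvee_{7}S^1.
\]
Reading off the homotopy groups of a wedge of $7$ circles gives $\pi_1\cong\ast_7\mathbb{Z}$, and, since such a wedge is aspherical, $\pi_p\cong 1$ for every $p\geq 2$, as claimed.

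The step I expect to require the most care is the verification that each piece $\bar{B}_\pm\cap C$ really is a $3$-ball and that the gluing is along exactly the $8$ face-disks; this is precisely where the hypothesis that $Q$ sits unknottedly inside the great sphere $\Sigma$ is essential, since a knotted or non-planar graph would produce a non-handlebody exterior and could destroy both the freeness of $\pi_1$ and the asphericity. I would make this precise either by a boundary-collapse argument for the regular neighbourhood, or by replacing it with an explicit van Kampen computation over the open cover of $S^3-Q$ by the two balls thickened across the $8$ faces, checking directly that the resulting presentation has $7$ free generators and no relations.
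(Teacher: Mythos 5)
Your proposal is correct and follows essentially the same route as the paper: both identify $Q_{2,3}(S^3)_{b_3}$ as the octahedral $1$-skeleton ($6$ vertices, $12$ edges, $8$ faces) on the great $2$-sphere $\mathrm{Span}\{e_1,e_2,e_3\}\cap S^3$ and conclude that the complement has the homotopy type of a wedge of $7$ circles. The paper simply asserts that the complement deformation retracts onto the ``dual graph'' with two vertices and eight edges, which is exactly the spine of your genus-$7$ handlebody; your two-balls-glued-along-eight-disks argument supplies the justification that the paper leaves implicit.
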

\begin{proof}
Let $\{i,j\}\subset \{1,2,3\}$, recall that
\begin{align*}
Q_{3}^{i,j}(S^3)_e=\{x\in S^3\mid x,e_i,e_j \mbox{ are linearly dependent}\}=\mbox{Span}\{e_i,e_j\}\cap S^3
\end{align*}
where $\mbox{Span}\{e_i,e_j\}$ is a 2-dimension subspace of $\mathbb{R}^4$. Thus $Q_{3}^{i,j}(S^3)_e$ is the unit circle in $\mbox{Span}\{e_i,e_j\}$  and $Q_{2,3}(S^3)_e=Q_{3}^{1,2}(S^3)_e\cup Q_{3}^{1,3}(S^3)_e\cup Q_{3}^{2,3}(S^3)_e$ is the union of three unit circles in Span$\{e_1,e_2,e_3\}$ (see Figure \ref{e}).

\begin{figure}[ht]
\centering
\includegraphics[scale=0.3]{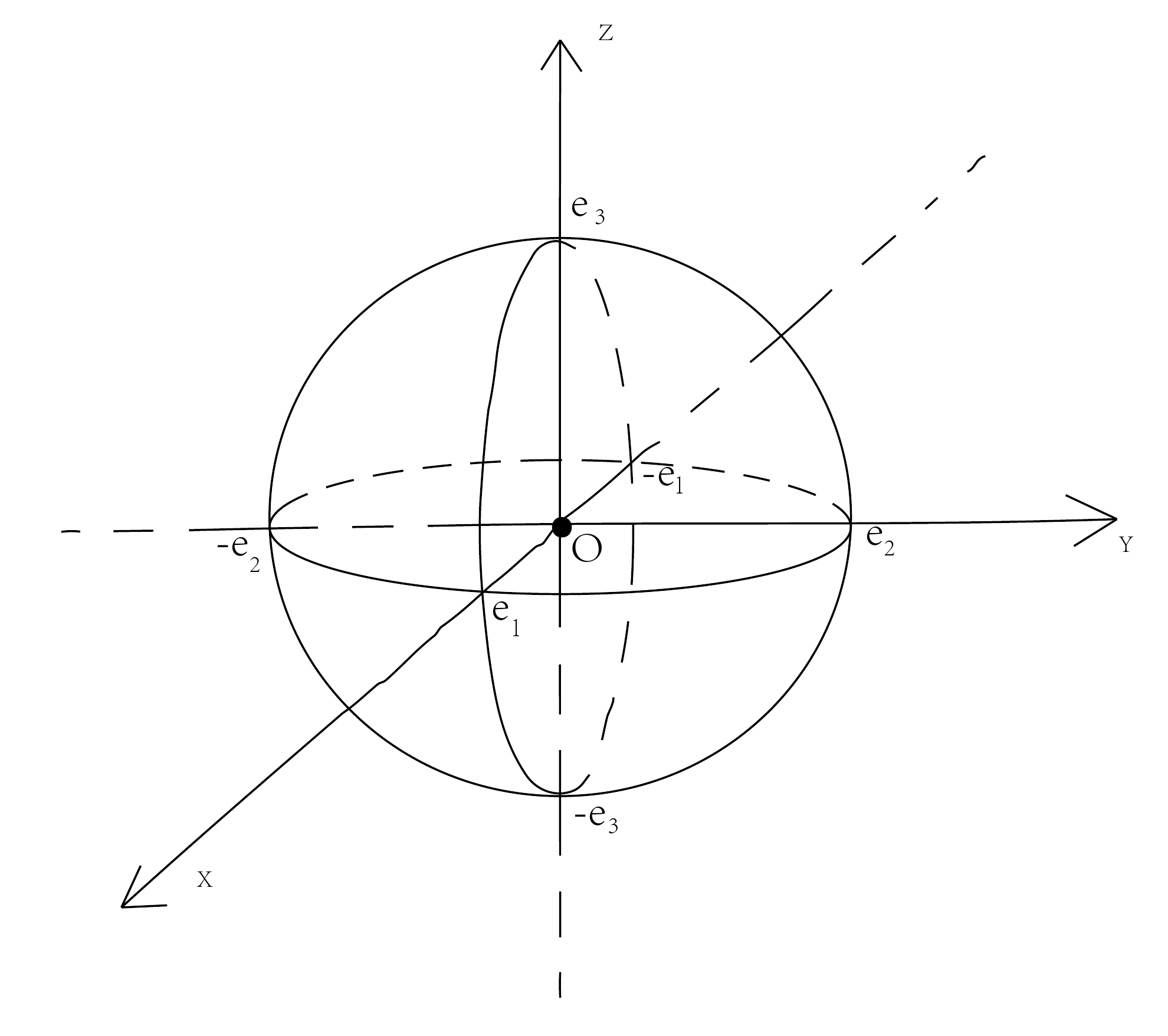}
\caption{$Q_{2,3}(S^3)_{b_3}\subset\mbox{Span}\{e_1,e_2,e_3\}$}
\label{e}
\end{figure}

Actually,  $Q_{2,3}(S^3)_{b_3}$ is a graph in $S^3$ which has 6 vertexes and 12 edges, its complementary space $S^3-Q_{2,3}(S^3)_{b_3}$ is  homotopy equivalent to the dual graph (see Figure \ref{dual}) which has two vertexes and eight edges.

\begin{figure}[ht]
\centering
\includegraphics[scale=0.3]{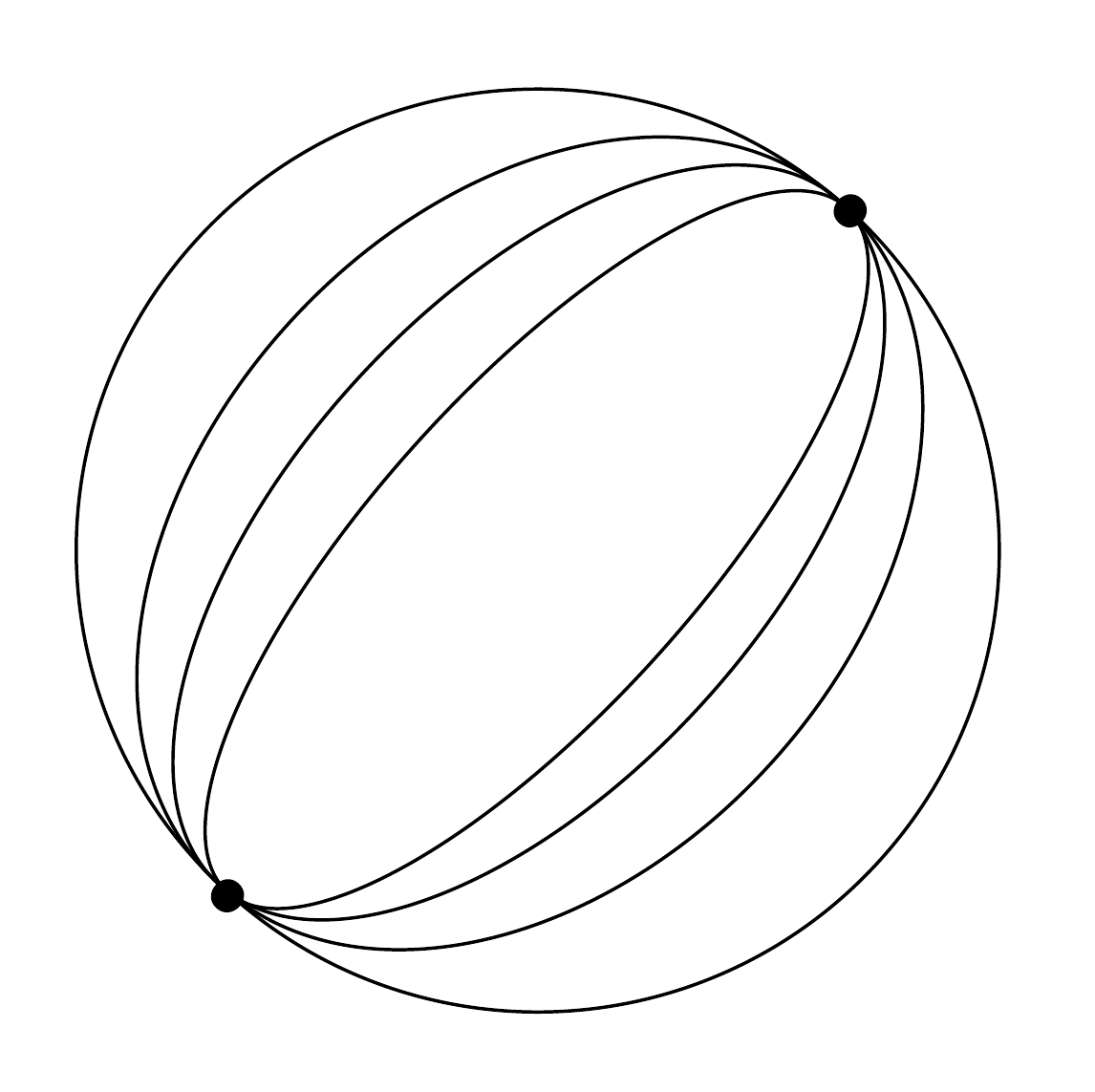}
\caption{The dual graph}
\label{dual}
\end{figure}

Thus  the fundamental group of $S^3-Q_{2,3}(S^3)_{b_3}$ is isomorphic to the free product of seven copies of $\mathbb{Z}$ : $\mathbb{Z}\ast\mathbb{Z}\ast\mathbb{Z}\ast\mathbb{Z}\ast\mathbb{Z}\ast\mathbb{Z}\ast\mathbb{Z}$ denoted by $\ast_{7}\mathbb{Z}$, and
\[
\pi_p(S^3-Q_{2,3}(S^3)_{b_3})=1, p\geq 2.
\]
\end{proof}

By Lemma \ref{k}, we obtain
\[
\pi_p(W_{3,3}(S^3))\cong \pi_p(V_{4,3})\cong
\begin{cases}
 \mathbb{Z}_2,& p=1,\\
 1,&p=2,\\
  \pi_p(S^2)\oplus\pi_p(S^2),                &p\geq 3.
\end{cases}
\]
then the following theorem is a direct result of  Theorem \ref{m} and Lemma \ref{tree}.
\begin{theorem}
\[
\pi_p(W_{3,4}(S^3))\cong
\begin{cases}
 (\ast_7\mathbb{Z})\oplus\mathbb{Z}_2 ,&p=1,\\
 1,&p=2,\\
 \pi_p(S^2)\oplus \pi_p(S^2), &p\geq 3.
\end{cases}
\]
\end{theorem}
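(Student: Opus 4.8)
The plan is to read the statement off directly from Theorem~\ref{m}, specialized to $k=m=3$, by feeding it the two homotopy computations that have just been recorded. First I would invoke Theorem~\ref{m} with $m=3$, which gives the splitting
\[
\pi_p(W_{3,4}(S^3))\cong \pi_p(V_{4,3})\oplus \pi_p\bigl(S^3-Q_{2,3}(S^3)_{b_3}\bigr),\qquad p\geq 1.
\]
Here the identification $\pi_p(W_{3,3}(S^3))\cong\pi_p(V_{4,3})$ comes from Lemma~\ref{k}, and the direct-sum (rather than merely semidirect) structure at $p=1$ is exactly what the closing part of the proof of Theorem~\ref{m} establishes, so no further work is needed on the bundle side.

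Next I would substitute the two factors. For the base I use the values of $\pi_p(V_{4,3})$ recorded just above the statement: namely $\mathbb{Z}_2$ for $p=1$, the trivial group for $p=2$, and $\pi_p(S^2)\oplus\pi_p(S^2)$ for $p\geq 3$. For the fiber I use Lemma~\ref{tree}, which gives $\ast_7\mathbb{Z}$ for $p=1$ and the trivial group for every $p\geq 2$.

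Finally I would assemble the three cases. At $p=1$ the direct sum is $\mathbb{Z}_2\oplus(\ast_7\mathbb{Z})$, which I reorder as $(\ast_7\mathbb{Z})\oplus\mathbb{Z}_2$. At $p=2$ both summands vanish, giving the trivial group. For $p\geq 3$ the fiber contributes nothing while the base contributes $\pi_p(S^2)\oplus\pi_p(S^2)$. This is precisely the claimed formula.

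As for obstacles, there is essentially no new difficulty here: the statement is a direct corollary. All the genuine content has already been carried out, namely the proof inside Theorem~\ref{m} that the relevant exact sequence splits as a \emph{direct} product (the semidirect-is-direct argument realized through the explicit homotopy $H(s,t)$), and the dual-graph analysis of Lemma~\ref{tree} identifying $\pi_1(S^3-Q_{2,3}(S^3)_{b_3})$ as a free product of seven copies of $\mathbb{Z}$. The only point demanding any care is bookkeeping: ensuring the $p=1$ entry is written with the free-product factor first, and confirming that $V_{4,3}$ is $2$-connected in the sense that $\pi_2(V_{4,3})=1$, so that the middle case collapses to the trivial group.
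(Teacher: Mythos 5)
Your proposal is correct and follows exactly the route the paper takes: the paper presents this theorem as a direct consequence of Theorem~\ref{m} (with $m=3$), the displayed computation of $\pi_p(V_{4,3})$ via Lemma~\ref{k}, and Lemma~\ref{tree} for the fiber. Nothing further is needed.
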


Now, consider the second case $k=m+1$, note that $m\geq 1$, let $b_{m+1}$ be the base point of generalized configuration space $W_{m,m+1}(S^m)$ and Stiefel manifold $V_{m+1,m+1}$, $\tilde{b}_{m+1}=(e_1,....,e_{m+1},\frac{e_1+...+e_{m+1}}{\|e_1+...+e_{m+1}\|})$  be the base point of  generalized configuration space $W_{m+1,m+2}(S^m)$. The following theorem shows the homotopy groups of  generalized configuration space $W_{m+1,m+2}(S^m)$.
\begin{theorem}
$\pi_p(W_{m+1,m+2}(S^m),\tilde{b}_{m+1})\cong \pi_p(V_{m+1,m+1},b_{m+1}),p\geq 1$
\end{theorem}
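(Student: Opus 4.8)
The plan is to read off the result from the fiber bundle of Theorem \ref{bundle} in the case $k=m+1$, together with its cross-section from Theorem \ref{c-s}, and then to transport the base to the Stiefel manifold via Lemma \ref{dr}. Concretely, Theorem \ref{bundle} with $k=m+1$ supplies the fiber bundle
\[
S^m-Q_{m,m+1}(S^m)_{b_{m+1}} \hookrightarrow W_{m+1,m+2}(S^m)\xrightarrow{\pi} W_{m+1,m+1}(S^m),
\]
and Theorem \ref{c-s} shows it admits the cross-section $f$, which sends $b_{m+1}=(e_1,\ldots,e_{m+1})$ to $\tilde{b}_{m+1}$. Thus the entire argument reduces to understanding the fiber $F:=S^m-Q_{m,m+1}(S^m)_{b_{m+1}}$ well enough to collapse the long exact sequence of the bundle.

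First I would identify $F$ explicitly. Each $m$-element subset $\{i_1,\ldots,i_m\}\subset\{1,\ldots,m+1\}$ omits exactly one index $j$, and $x,e_{i_1},\ldots,e_{i_m}$ are linearly dependent precisely when $x\in\mbox{Span}\{e_i:i\neq j\}=\{x\in S^m:x_j=0\}$. Hence $Q_{m,m+1}(S^m)_{b_{m+1}}=\bigcup_{j=1}^{m+1}\{x\in S^m:x_j=0\}$ is the union of the $m+1$ coordinate great spheres, and its complement $F=\{x\in S^m:x_j\neq 0\text{ for all }j\}$ is the set of points on $S^m$ with no vanishing coordinate. This splits as a disjoint union of the $2^{m+1}$ open spherical orthants indexed by sign vectors in $\{+,-\}^{m+1}$, each homeomorphic to the interior of the $m$-simplex and therefore contractible, with the base point $\tilde{b}_{m+1}$ lying in the all-positive orthant.

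The decisive consequence is that $\pi_p(F,\tilde{b}_{m+1})=0$ for every $p\geq 1$: the higher homotopy vanishes because the component containing the base point is contractible, while $\pi_0(F)$ has $2^{m+1}$ elements but never enters the computation of $\pi_p$ for $p\geq 1$. Feeding this into the long exact sequence, exactness at $\pi_p(W_{m+1,m+2}(S^m))$ forces $\pi_*$ to be injective for all $p\geq 1$, since its kernel is the image of $\pi_p(F)=0$; meanwhile the cross-section yields $f_*$ with $\pi_*\circ f_*=\mbox{id}$, giving surjectivity. Hence $\pi_*$ is an isomorphism $\pi_p(W_{m+1,m+2}(S^m),\tilde{b}_{m+1})\cong\pi_p(W_{m+1,m+1}(S^m),b_{m+1})$ for all $p\geq 1$. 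Finally, Lemma \ref{dr} (equivalently Lemma \ref{k}) gives $\pi_p(W_{m+1,m+1}(S^m),b_{m+1})\cong\pi_p(V_{m+1,m+1},b_{m+1})$ because $V_{m+1,m+1}$ is a deformation retract of $W_{m+1,m+1}(S^m)$, and composing the two isomorphisms proves the claim.

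The main obstacle, and the one genuinely subtle point, is the fiber identification combined with its disconnectedness. The fiber $F$ is \emph{not} contractible, so one cannot argue that the total space is simply homotopy equivalent to the base; the correct observation is that every component of $F$ is contractible, so that its based higher homotopy and its based $\pi_1$ vanish and the disconnectedness is confined entirely to $\pi_0$, which is invisible in the range $p\geq 1$. Getting this dichotomy right—vanishing $\pi_p$ for $p\geq 1$ versus nontrivial $\pi_0$—is exactly what makes the long exact sequence force the isomorphism rather than merely a short exact or split sequence.
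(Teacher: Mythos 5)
Your proposal is correct and follows essentially the same route as the paper: both feed the $k=m+1$ fiber bundle of Theorem \ref{bundle} and its cross-section from Theorem \ref{c-s} into the homotopy exact sequence, observe that the fiber $S^m-Q_{m,m+1}(S^m)_{b_{m+1}}$ is a disjoint union of $2^{m+1}$ contractible open orthants so that its homotopy groups vanish for $p\geq 1$, and then invoke Lemma \ref{k} to pass to the Stiefel manifold. Your explicit identification of the fiber as the points with no vanishing coordinate, and your remark isolating the role of $\pi_0$ versus $\pi_p$ for $p\geq 1$, are just more detailed versions of the paper's statement that the fiber is homotopy equivalent to $\bigsqcup_{j=1}^{2^{m+1}}D_j^m$.
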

\begin{proof}
By similar arguments as in the proof of Theorem \ref{m}, we obtain that the following exact sequence is split for $p\geq 1$.
\begin{align*}
1\rightarrow \pi_{p}(S^m-Q_{m,m+1}(S^m)_{b_{m+1}}, \tilde{e}_{m+1})\rightarrow  \pi_{p}&(W_{m+1,m+2}(S^m),\tilde{b}_{m+1})\\
&\xrightarrow{\pi_*} \pi_{p}(W_{m+1,m+1}(S^m),b_{m+1})\rightarrow 1.
\end{align*}
where $\tilde{e}_{m+1}=\frac{e_1+...+e_{m+1}}{\|e_1+...+e_{m+1}\|}$ is the base point of the space $S^m-Q_{m,m+1}(S^m)_{b_{m+1}}$. It's clearly that $Q_{m,m+1}(S^m)_{b_{m+1}}\subset \mathbb{R}^{m+1}$ is a union of $(m-1)$-dimension unit sphere, then we obtain
$S^m-Q_{m,m+1}(S^m)_{b_{m+1}} \simeq \bigsqcup_{j=1}^{2^{m+1}} D_j^{m}$ where $D_j^m,1\leq j\leq 2^{m+1}$ are  $m$-dimension disks, then
\[
 \pi_{p}(W_{m+1,m+2}(S^m),\tilde{b}_{m+1})\cong \pi_{p}(W_{m+1,m+1}(S^m),b_{m+1}),p\geq 1,
\]
thus $\pi_{p}(W_{m+1,m+2}(S^m),\tilde{b}_{m+1})\cong \pi_{p}(V_{m+1,m+1},b_{m+1}),p\geq 1$ by Lemma \ref{k}.
\end{proof}

\subsection{Case 2: $M=\mathbb{R}P^m$}
Let $M=\mathbb{R}P^m$, by Theorem \ref{cover}, $\tilde{P}:W_{k,n}(S^m)\rightarrow W_{k,n}(\mathbb{R}P^m)$ is a $2^n$-sheeted covering space and the group of deck transformations of this covering space  is $(\mathbb{Z}_2)^n$, then we obtain the following lemma.
\begin{lemma}
$\pi_p(W_{k,n}(\mathbb{R}P^m))\cong \pi_p(W_{k,n}(S^m)),p\geq 2.$
\end{lemma}
The following theorem give fundamental group for some $k,n,M$.
\begin{theorem}\label{rp3}
\[
\pi_{1}(W_{k,n}(\mathbb{R}P^m))=
\begin{cases}
(\mathbb{Z}_2)^n, &k=n\leq m-1,\\
Q_8, &k=n=m=2,\\
Q_8\oplus \mathbb{Z}_2, &k=n=m=3.
\end{cases}
\]
\end{theorem}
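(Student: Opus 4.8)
The plan is to exploit the regular covering $\tilde P\colon W_{k,n}(S^m)\to W_{k,n}(\mathbb{R}P^m)$ supplied by Lemma~\ref{cover}, whose group of deck transformations is $(\mathbb{Z}_2)^n$. Since every deck transformation group acts freely and this covering is normal, it yields a short exact sequence
\[
1\longrightarrow \pi_1(W_{n,n}(S^m))\longrightarrow \pi_1(W_{n,n}(\mathbb{R}P^m))\longrightarrow (\mathbb{Z}_2)^n\longrightarrow 1 ,
\]
and the whole computation reduces to identifying the kernel $\pi_1(W_{n,n}(S^m))\cong\pi_1(V_{m+1,n})$ via Lemma~\ref{k}, together with the isomorphism type of this extension. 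All three cases of the statement have $k=n$, so this is exactly the situation to which Lemma~\ref{k} applies.

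First I would dispose of the case $k=n\le m-1$. Here $m+1-n\ge 2$, so the Stiefel manifold $V_{m+1,n}$ is $(m-n)$-connected and hence simply connected; the kernel is therefore trivial and the sequence collapses to $\pi_1(W_{n,n}(\mathbb{R}P^m))\cong(\mathbb{Z}_2)^n$ at once.

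For the two diagonal cases $k=n=m\in\{2,3\}$ one has $V_{m+1,m}\cong SO(m+1)$, so the kernel is $\pi_1(SO(m+1))\cong\mathbb{Z}_2$, generated by the class $c$ of a single $2\pi$-rotation; since $\mbox{Aut}(\mathbb{Z}_2)$ is trivial the extension is automatically central, of order $2^{m+1}$. I would then take lifts $a_1,\dots,a_m$ of the standard generators of $(\mathbb{Z}_2)^m$, where $a_i$ is the loop fixing every coordinate except the $i$-th, which it rotates through a half-turn $e_i\mapsto -e_i$ inside a $2$-plane disjoint from the remaining coordinate directions, so that the configuration condition is preserved; these generate $\pi_1(W_{m,m}(\mathbb{R}P^m))$ together with $c$. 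Two local computations fix the extension. The square $a_i^2$ is a full $2\pi$-rotation of the $i$-th vector, which under the retraction onto $V_{m+1,m}\cong SO(m+1)$ is a $2\pi$-rotation about a fixed axis and hence equals $c$ by the belt trick; and the commutator $[a_i,a_j]$, which maps to $0$ in $(\mathbb{Z}_2)^m$ and so lifts to a genuine loop in $W_{m,m}(S^m)$, likewise represents $c$.

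The commutator identity $[a_i,a_j]=c$ is the main obstacle: the squares reduce transparently to rotations in $SO(m+1)$, whereas the commutator requires lifting the four-segment loop $a_i a_j a_i^{-1}a_j^{-1}$ through $\tilde P$ and recognizing the resulting based loop as the nontrivial rotation class. For $m=2$ this is precisely the classical braid relation in $F_2(\mathbb{R}P^2)=W_{2,2}(\mathbb{R}P^2)$, and for $m=3$ I would carry it out by an explicit homotopy of the same shape as in the proof of Theorem~\ref{m}. Granting $a_i^2=[a_i,a_j]=c$, the conclusion is pure $\mathbb{F}_2$-linear algebra. For $m=2$ the associated quadratic form on $(\mathbb{Z}_2)^2$ is anisotropic, since $q(e_1)=q(e_2)=q(e_1+e_2)=c$, which singles out $Q_8$ among the two extraspecial groups $D_8$ and $Q_8$ of order $8$. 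For $m=3$ the commutator form on $(\mathbb{Z}_2)^3$ is degenerate with one-dimensional radical spanned by the image of $z=a_1a_2a_3$; a direct bilinearity check using centrality of $c$ shows $z$ is central with $z^2=c^6=1$, while $H=\langle a_1,a_2\rangle\cong Q_8$ meets $\langle z\rangle$ trivially and has index $2$, whence $\pi_1(W_{3,3}(\mathbb{R}P^3))=H\times\langle z\rangle\cong Q_8\oplus\mathbb{Z}_2$.
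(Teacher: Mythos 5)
Your first case ($k=n\le m-1$) is the paper's argument verbatim: trivial kernel from the connectivity of $V_{m+1,n}$, then the deck group $(\mathbb{Z}_2)^n$ is the whole fundamental group. For $k=n=m\in\{2,3\}$, however, you take a genuinely different route. The paper simply cites Van Buskirk for $\pi_1(F_2(\mathbb{R}P^2))\cong Q_8$, and for $m=3$ it combines Armstrong's exact sequence with the classification of groups of order $16$ and then pins down the isomorphism type by counting elements of order $2$ and $4$ in Rhodes' reduced fundamental group. You instead treat the extension $1\to\mathbb{Z}_2\to G\to(\mathbb{Z}_2)^m\to 1$ as a central extension with explicit lifted generators $a_i$, and determine $G$ from the squaring map and commutator pairing --- the standard quadratic-form calculus for groups of exponent dividing $4$ with central commutator subgroup of order $2$. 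This buys more than the paper's proof does: it produces a presentation (which the paper only records afterwards in a remark, without derivation), it handles $m=2$ and $m=3$ uniformly instead of outsourcing $m=2$ to a citation, and your $\mathbb{F}_2$-linear algebra at the end (anisotropy for $m=2$; the radical of the commutator form spanned by $a_1a_2a_3$, giving the $Q_8\times\mathbb{Z}_2$ splitting, for $m=3$) is correct and cleaner than an order count against a list of candidate groups.

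The one place your proof is not yet a proof is exactly where you say it is: the identity $[a_i,a_j]=c$. The squares $a_i^2=c$ are indeed immediate (a full $2\pi$-rotation of $e_i$ in the plane $\mathrm{Span}\{e_i,e_{m+1}\}$ retracts to the standard generator of $\pi_1(SO(m+1))$), but for the commutator you only promise ``an explicit homotopy of the same shape as in Theorem \ref{m},'' and for $m=2$ you defer to ``the classical braid relation,'' which is in effect the same citation the paper makes. The claim is true, and there is a clean way to close it that avoids any homotopy construction: your loops $a_i$ retract to the paths $t\mapsto R_{i,m+1}(\pi t)$ in $V_{m+1,m}\cong SO(m+1)$, so the lifted commutator loop ends, in $\mathrm{Spin}(m+1)$, at the commutator of the lifts $\gamma_i\gamma_{m+1}$ and $\gamma_j\gamma_{m+1}$ of the two half-turns; since these two elements of the Clifford algebra anticommute, that commutator is $-1$, i.e.\ the nontrivial deck transformation of $\mathrm{Spin}\to SO$, which is precisely $c$. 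With that inserted, your argument is complete and, in my view, more informative than the one in the paper.
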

\begin{proof}
Let $k=n\leq m-1$. By Lemma \ref{k}, $\pi_1(W_{n,n}(S^m))\cong \pi_1(V_{m+1,n})$, it is convenient to calculate that $\pi_1(V_{m+1,n})=1,n\leq m-1$ via the following sequence of  fiber bundles
\begin{align*}
V_{m,n-1}&\hookrightarrow V_{m+1,n}\rightarrow V_{m+1,1},\\
V_{m-1,n-2}&\hookrightarrow V_{m,n-1}\rightarrow V_{m,1},\\
&\vdots\\
V_{m-j-1,n-j-2}&\hookrightarrow V_{m-j,n-j-1}\rightarrow V_{m-j,1},\\
&\vdots\\
V_{m-(n-3)-1,n-(n-3)-2}&\hookrightarrow V_{m-{n-3},n-(n-3)-1}\rightarrow V_{m-(n-3),1}
\end{align*}
Then $W_{n,n}(S^m)$ is a universal covering space of  $W_{n,n}(\mathbb{R}P^m)$. It is understood that the group of deck transformations of this covering space  is $(\mathbb{Z}_2)^n$, then
\[
\pi_1(W_{k,n}(\mathbb{R}P^m))\cong (\mathbb{Z}_2)^n, k=n\leq m-1.
\]\par
Let $k=n=m=2$, it's easily seen that $W_{2,2}(\mathbb{R}P^2)$ is equal to Fadell's configuration space $F_{2}(\mathbb{R}P^2)$, following J.V.Buskirk \cite{J.V.Buskirk1966}, we obtain that
\[
\pi_1(W_{2,2}(\mathbb{R}P^2))\cong \pi_1(F_{2}(\mathbb{R}P^2))\cong Q_8
\]
where $Q_8=\langle a,b\mid a^4=1,b^2=a^2,b^{-1}ab=a^3\rangle$ is the quaternion group.\par
Let $k=n=m=3$, consider  $W_{3,3}(\mathbb{R}P^3)$ as the orbit space $W_{3,3}(S^3)/(\mathbb{Z}_2)^3$ where the action of $\mathbb{Z}_2$ on $S^3$ is generated by the antipodal map of $S^3$.  Following the result by M.A.Armstrong \cite{Armstrong1968}, we obtain exact sequence
 \[
 1\rightarrow \pi_1(W_{3,3}(S^3))\rightarrow \pi_1(W_{3,3}(\mathbb{R}P^3))\rightarrow  (\mathbb{Z}_2)^3\rightarrow 1.
 \]
By Lemma \ref{k}, $\pi_1(W_{3,3}(S^3))\cong \pi_1(V_{4,3})\cong \mathbb{Z}_2$, then $\pi_1(W_{3,3}(\mathbb{R}P^3))$ is a finite group of order 16 and the quotient group $\pi_1(W_{3,3}(\mathbb{R}P^3))/ \mathbb{Z}_2 $ is isomorphic to  $ (\mathbb{Z}_2)^3$. By the classification of finite groups of order 16, $\pi_1(W_{3,3}(\mathbb{R}P^3))$ maybe  isomorphic to following groups:
\begin{itemize}
  \item (1)$\mathbb{Z}_2\oplus \mathbb{Z}_2\oplus \mathbb{Z}_2\oplus \mathbb{Z}_2$,
  \item (2)$D_8\oplus \mathbb{Z}_2$,
  \item (3)$Q_8\oplus \mathbb{Z}_2$,
  \item (4)$\langle a,b,c\mid a^4=b^2=c^2=1,[b,c]=a^2,[a,b]=[a,c]=1\rangle$.
\end{itemize}
where $D_8=\langle a,b\mid a^4=b^2=1,b^{-1}ab=a^3\rangle$ is the dihedral group.\\
Following F.Rhodes \cite[Theorem 4]{Rhodes1966}, the fundamental group of orbit space $W_{3,3}(\mathbb{R}P^3)$ with the choosen base point $([e_1],[e_2],[e_3]))$ is isomorphic to the reduced fundamental group $\tilde{\sigma}(W_{3,3}(S^3),(e_1,e_2,e_3),(\mathbb{Z}_2)^3)$ of transformation group $(W_{3,3}(S^3), (\mathbb{Z}_2)^3)$. It is easily calculated that $\tilde{\sigma}(W_{3,3}(S^3),(e_1,e_2,e_3),(\mathbb{Z}_2)^3)$  has   twelve  elements of order 4 and three elements  of order 2, then the fundamental group of $W_{3,3}(\mathbb{R}P^3)$ is isomorphic to $Q_8\oplus \mathbb{Z}_2$.
\end{proof}
\begin{remark}
The presentation of  $\pi_1(W_{3,3}(\mathbb{R}P^3),([e_1],[e_2],[e_3]))$ is given as follows.
\begin{align*}
\pi_1(W_{3,3}&(\mathbb{R}P^3),([e_1],[e_2],[e_3]))\\
=&\langle \beta_1,\beta_2,\beta_3\mid \beta_1^4=1,\beta_2^2=\beta_1^2,\beta_2^{-1}\beta_1\beta_2=\beta_1^{-1},\beta_3^2=1,[\beta_1,\beta_3]=[\beta_2,\beta_3]=1\rangle.
\end{align*}
Here $\beta_1,\beta_2,\beta_3 \in \pi_1(W_{3,3}(\mathbb{R}P^3),([e_1],[e_2],[e_3]))$  is defined  by
\begin{align*}
\beta_1(t)&=([\delta_1(t)e_1],[\delta_1(t)e_2],[\delta_1(t)e_3]),\\
\beta_2(t)&=([\delta_2(t)e_1],[\delta_2(t)e_2],[\delta_2(t)e_3]),\\
\beta_3(t)&=([\delta_3(t)e_1],[\delta_3(t)e_2],[\delta_3(t)e_3]).
\end{align*}
where
\[
\delta_1(t)=
\left(
  \begin{array}{ccccc}
    1& 0 &0&0 \\
    0 & 1 & 0&0\\
    0& 0 & \cos \pi t &\sin \pi t \\
   0& 0&-\sin \pi t &\cos \pi t
  \end{array}
\right), 0\leq t\leq 1,
\]
\[
\delta_2(t)=
\left(
  \begin{array}{ccccc}
    1& 0 &0&0 \\
  0  & \cos \pi t &\sin \pi t&0 \\
   0 &-\sin \pi t &\cos \pi t&0\\
    0 & 0 & 0&1\\
  \end{array}
\right), 0\leq t\leq 1,
\]
\[
\delta_3(t)=
\left(
  \begin{array}{ccccc}
  \cos \pi t &\sin \pi t&0  &0\\
   -\sin \pi t &\cos \pi t&0&0\\
   0&0&\cos \pi t &\sin \pi t\\
   0&0&-\sin \pi t &\cos \pi t
  \end{array}
\right), 0\leq t\leq 1.
\]
The proof is as follows. It is understood that the path lifting $\beta_1^2$ starting at $(e_1,e_2,e_3)$ is $\alpha\in \pi_1(W_{3,3}(S^3))$ and $\alpha$ has order $2$  (See the proof of Theorem \ref{m} for  definition of $\alpha$),  then $\beta_1^2$ has order $2$ in the fundamental group of $W_{3,3}(\mathbb{R}P^3)$ via the exact sequence
\[
1\rightarrow \pi_1(W_{3,3}(S^3))\rightarrow \pi_1(W_{3,3}(\mathbb{R}P^3))\rightarrow \oplus_3 \mathbb{Z}_2\rightarrow 1,
\]
thus $\beta_1$ has order $4$ in the fundamental group of $W_{3,3}(\mathbb{R}P^3)$. It's easily seen that $\beta_3$ has order $2$.   By Theorem \ref{rp3}, $\pi_1(W_{3,3}(\mathbb{R}P^3))\cong Q_8\oplus \mathbb{Z}_2$. It's convenient to verify the relations $\beta_2^2=\beta_1^2,\beta_2^{-1}\beta_1\beta_2=\beta_1^{-1},[\beta_1,\beta_3]=[\beta_2,\beta_3]=1$ via the  construction of  the homotopy as in the proof of Theorem \ref{m}, thus we obtain the presentation.
\end{remark}
\begin{theorem}
$\pi_1(W_{4,4}(\mathbb{R}P^4))\cong Q_8\ast_{Z} D_8$
\end{theorem}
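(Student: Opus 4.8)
The plan is to follow the strategy used for the case $k=n=m=3$ in Theorem~\ref{rp3}, pushing the computation from order $16$ up to order $32$. Write $G=\pi_1(W_{4,4}(\mathbb{R}P^4))$. By Lemma~\ref{cover} and the Example following it, the map $\tilde P\colon W_{4,4}(S^4)\to W_{4,4}(\mathbb{R}P^4)$ is a regular $16$-sheeted covering with deck group $(\mathbb{Z}_2)^4$, so it yields the exact sequence
\[
1\to \pi_1(W_{4,4}(S^4))\to G\to (\mathbb{Z}_2)^4\to 1.
\]
First I would identify the kernel: Lemma~\ref{k} gives $\pi_1(W_{4,4}(S^4))\cong\pi_1(V_{5,4})$, and the fibration $V_{4,3}\hookrightarrow V_{5,4}\to V_{5,1}=S^4$ together with $\pi_1(V_{4,3})\cong\pi_1(SO(4))\cong\mathbb{Z}_2$ shows $\pi_1(V_{5,4})\cong\mathbb{Z}_2$. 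Hence $\lvert G\rvert=32$, and the kernel $Z=\langle\alpha\rangle\cong\mathbb{Z}_2$ is normal; because $\mathrm{Aut}(\mathbb{Z}_2)$ is trivial it is automatically central. Thus $G$ is a central extension of the elementary abelian group $V:=(\mathbb{Z}_2)^4$ by $\mathbb{Z}_2$, and as such is determined completely by the squaring map $q(\bar g)=g^2\in Z$, a quadratic form on $V$, together with its polarization, the commutator form $b(\bar g,\bar h)=[g,h]\in Z$.

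The second step is to evaluate $q$ and $b$ on an explicit generating set, exactly as the presentation in the Remark after Theorem~\ref{rp3} is obtained when $m=3$. Working in $\mathbb{R}^5$ with base point $b_4=(e_1,e_2,e_3,e_4)$, for each $i$ let $\beta_i$ be the loop in $W_{4,4}(\mathbb{R}P^4)$ obtained by projecting the path of frames that rotates the $i$-th vector from $e_i$ to $-e_i$ inside $\mathrm{Span}\{e_i,e_5\}$ while fixing the other three vectors; the four vectors remain linearly independent throughout, the image of $\beta_i$ in the deck group is the standard $i$-th generator, and so the $\beta_i$ project onto a basis of $V$. The loop $\beta_i^2$ lifts to a full $2\pi$ rotation of the frame in a coordinate plane of $V_{5,4}\cong SO(5)$, which represents the nontrivial element of $\pi_1(SO(5))\cong\mathbb{Z}_2$; hence $\beta_i^2=\alpha$ and $q$ is identically $1$ on the basis. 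For the commutators, each word $\beta_i\beta_j\beta_i^{-1}\beta_j^{-1}$ maps to $1$ in $V$, so it lifts to a genuine loop in $V_{5,4}\cong SO(5)$; reading off its class in $\pi_1(SO(5))$ by lifting the rotation by $\pi$ in $\mathrm{Span}\{e_i,e_5\}$ to the Clifford element $e_ie_5$ (which squares to $-1$) gives $[\beta_i,\beta_j]=\alpha$ for all $i\neq j$. I would cross-check this last identity by building an explicit nullhomotopy of the commutator path in the style of the homotopy $H$ of Theorem~\ref{m}, which is also how the corresponding relations are verified for $m=3$.

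With this data in hand the identification is linear algebra over $\mathbb{F}_2$. On the basis $\gamma_1,\dots,\gamma_4$ the form $b$ has matrix $J-I$, the adjacency matrix of $K_4$, whose determinant is $(4-1)(-1)^{3}=-3\equiv 1\pmod 2$; thus $b$ is nondegenerate of rank $4$. A central extension of $(\mathbb{Z}_2)^4$ by $\mathbb{Z}_2$ with nondegenerate commutator form is extraspecial of order $2^{1+4}=32$, so $G$ is one of the two extraspecial groups, $D_8\ast_Z D_8$ or $Q_8\ast_Z D_8$. To separate them I would compute the Arf invariant of $q$: since $q(a)=\sum_i a_i+\sum_{i<j}a_ia_j\equiv s+\binom{s}{2}\pmod 2$ with $s$ the Hamming weight of $a$, the equation $q=0$ has $1+4+1=6=2^3-2^1$ solutions, so $\mathrm{Arf}(q)=1$ and $G\cong Q_8\ast_Z D_8$. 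The same conclusion follows from an order count: the $10$ vectors with $q=1$ lift to $20$ elements of order $4$, the signature of the minus type, and this count is exactly what one extracts from the reduced fundamental group $\tilde\sigma(W_{4,4}(S^4),b_4,(\mathbb{Z}_2)^4)$ of \cite{Rhodes1966}, as was done for $m=3$.

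The principal obstacle is the honest evaluation of the commutators $[\beta_i,\beta_j]$: one must show they equal the nontrivial central class $\alpha$ rather than the identity, and the Spin/Clifford bookkeeping (or the equivalent explicit homotopy) is where sign errors are easy to make. A secondary but genuine point is that nondegeneracy of $b$ only pins $G$ down to an extraspecial group of order $32$; distinguishing $Q_8\ast_Z D_8$ from $D_8\ast_Z D_8$ requires the Arf invariant, equivalently the count of order-$4$ elements, and it is precisely this count that forces the minus type.
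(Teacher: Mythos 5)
Your proposal is correct, and its skeleton coincides with the paper's: both start from the exact sequence $1\to\pi_1(W_{4,4}(S^4))\to G\to(\mathbb{Z}_2)^4\to 1$ coming from the $16$-sheeted covering, identify the kernel as the central $\mathbb{Z}_2=\pi_1(V_{5,4})$, conclude $|G|=32$, and then pin down the isomorphism type by the element-order structure. Where you genuinely diverge is in how that structure is obtained. The paper appeals to Rhodes's reduced fundamental group of the transformation group $(W_{4,4}(S^4),(\mathbb{Z}_2)^4)$ and simply asserts the count of twenty elements of order $4$ and eleven of order $2$ before invoking the classification of groups of order $2^5$; no computation is shown. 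You instead exploit $V_{5,4}\cong SO(5)$ to realize $G$ as the preimage in $\mathrm{Spin}(5)$ of the subgroup generated by the rotations $R_i=\mathrm{diag}$ with $-1$ in slots $i$ and $5$, and read off $\beta_i^2=\alpha$ and $[\beta_i,\beta_j]=\alpha$ from $(e_ie_5)^2=-1$ and the anticommutation of $e_ie_5$ with $e_je_5$; the nondegeneracy of the resulting commutator form ($\det(J+I)\equiv 1$ over $\mathbb{F}_2$) forces $G$ extraspecial, and the Arf invariant of $q(a)=s+\binom{s}{2}$ (six zeros, minus type) selects $Q_8\ast_Z D_8$. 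This buys a self-contained, checkable argument that actually supplies the step the paper leaves as an assertion, and your final order count ($10$ classes with $q=1$ giving $20$ elements of order $4$, plus $11$ of order $2$) reproduces the paper's numbers exactly; the same Clifford computation restricted to $\mathbb{F}_2^3$ also recovers the $12$/$3$ count and $Q_8\oplus\mathbb{Z}_2$ in the $m=3$ case, confirming the consistency of your method with Theorem \ref{rp3}. The only points worth making explicit in a final write-up are that the Gram--Schmidt retraction of $W_{4,4}(S^4)$ onto $V_{5,4}$ is equivariant for the sign-change action (so the deck group really does become right translation by the $R_i$ on $SO(5)$, with fifth column negated to preserve orientation), and the standard fact that for a free translation action the fundamental group of the quotient is the preimage of the translating subgroup in the universal covering group.
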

\begin{proof}
The proof is similar to that of  Theorem \ref{rp3}. Apply the exact sequence
\[
 1\rightarrow \pi_1(W_{4,4}(S^4))\rightarrow \pi_1(W_{4,4}(\mathbb{R}P^4))\rightarrow \oplus_4 \mathbb{Z}_2\rightarrow 1
\]
 and $\pi_1(W_{4,4}(S^4))\cong \pi_1(V_{5,4})\cong \mathbb{Z}_2$ (by Lemma \ref{k}),  $\pi_1(W_{4,4}(\mathbb{R}P^4),([e_1],[e_2],[e_3],[e_4])) $ is a finite group of order $2^5$ and  $\pi_1(W_{4,4}(\mathbb{R}P^4),([e_1],[e_2],[e_3],[e_4])) /\mathbb{Z}_2\cong (\mathbb{Z}_2)^4$.  Calculate the reduced fundamental group $\tilde{\sigma}(W_{4,4}(S^4),(e_1,e_2,e_3,e_4),(\mathbb{Z}_2)^4)$ of transformation group $(W_{4,4}(S^4), (\mathbb{Z}_2)^4)$, we obtain that there are twenty  elements of order 4 and eleven elements of order 2 in $\tilde{\sigma}(W_{4,4}(S^4),(e_1,e_2,e_3,e_4),(\mathbb{Z}_2)^4)$ which is isomorphic to $\pi_1(W_{4,4}(\mathbb{R}P^4),([e_1],[e_2],[e_3],[e_4]))$. \\
 By the classification of finite group of order $2^5$, $\pi_1(W_{4,4}(\mathbb{R}P^4))$ is isomorphic to an extraspecial 2-group of order $2^5$ which is the central product of $Q_8$ and $D_8$ denoted by $Q_8\ast_{Z} D_8$.
\end{proof}

%%%%%%%%%%%%%%%%%%%%   End of main body of article
%
%                             References
%
%   BiBTeX users uncomment the following line:
%

\bibliographystyle{gtart}
\bibliography{configuration}
%

%\begin{thebibliography}

%\end{thebibliography}

\end{document}